\newcommand\blfootnote[1]{%
 \begingroup
 \renewcommand\thefootnote{}\footnote{#1}%
 \addtocounter{footnote}{-1}%
 \endgroup
}
\newtheorem{theorem}{Theorem}
\newtheorem{lemma}[theorem]{Lemma}
\newtheorem{corollary}[theorem]{Corollary}
\newtheorem{proposition}[theorem]{Proposition}
\newtheorem{remark}[theorem]{Remark}
\newtheorem{clm}{Claim}{\itshape}{\rmfamily}
\newtheorem{definition}[theorem]{Definition}
\author{
Jos\'e C\'aceres\affiliationmark{1}
\and Carmen Hernando\affiliationmark{2}
\and Merc\`e Mora\affiliationmark{2}
\and{Ignacio M. Pelayo\affiliationmark{2}}
\and{Mar\'ia Luz Puertas\affiliationmark{1}}
}
\title[General Bounds on  Limited Broadcast Domination]{
General Bounds on  Limited Broadcast Domination.
\thanks{Partially supported by projects
MTM2014-60127-P, MTM2015-63791-R, Gen.Cat.DGR 2014SGR46, J. Andaluc\'ia FQM305.}
\blfootnote{
\begin{minipage}[l]{0.3\textwidth}
\includegraphics[trim=10cm 6cm 10cm 5cm,clip,scale=0.1]{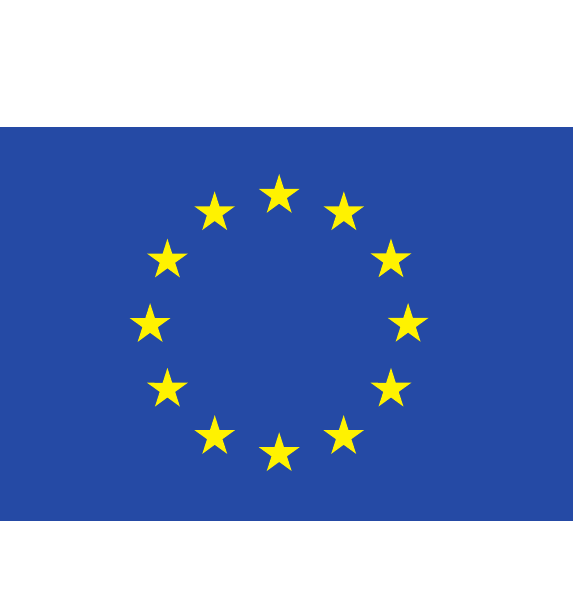}
\end{minipage}
\hspace{-3.5cm}
\begin{minipage}[l][1cm][c]{0.87\textwidth}
$^*$This project has received funding from the European Union's Horizon 2020 research and innovation programme under the Marie Sk\l{}odowska-Curie grant agreement No 734922.
\end{minipage}
}}
\affiliation{
  Department of Mathematics, Universidad de Almer\'ia, Spain\\
  Department of Mathematics, Universitat Polit\`ecnica de Catalunya, Spain}
\begin{document}

\publicationdetails{20}{2018}{2}{13}{4054}

\maketitle

\begin{abstract}
Dominating broadcasting is a domination-type structure that models a transmission antenna network. In this paper, we study a limited version of this structure, that was proposed as a common framework for both broadcast and classical domination. In this limited version, the broadcast function is upper bounded by an integer $k$ and the minimum cost of such function is the dominating $k$-broadcast number. Our main result is a unified upper bound on this parameter for any value of $k$ in general graphs, in terms of both $k$ and the order of the graph. We also study the computational complexity of the associated decision problem.
\keywords{Broadcast, domination, graph}
\end{abstract}

\section{Introduction}\label{sec1:intro}

Domination in graphs is a classical topic in Graph Theory, that has received a lot of attention since its first formal definitions in 1958 \cite{Berge58} and 1962 \cite{Ore62}. A \emph{dominating set} of a graph $G$ is a vertex set $S$ such that every vertex $v\in V(G)\setminus S$ has at least one neighbor in $S$, that is $N(v)\cap S\neq \emptyset$, where $N(v)$ is the \emph{open neighborhood} of $v$, the set of all neighbors of $v$. As a general idea, a dominating set can be considered as a distribution model of a resource in a network, so that all nodes in the network have guaranteed access to it. At the same time, the \emph{domination number}, that is the minimum cardinal of a dominating set, provides an optimization measure in the distribution of such a resource.

There exists a number of variations of the classical domination definition, that emphasize particular points of view of it. For instance, $k$-domination focuses on the number of neighbors of vertices not in the dominating set, independent dominating sets ask for the additional property of independence of the dominating set and locating-dominating sets take into account properties related with the intersection sets $N(v)\cap S$, for vertices $v$ not in the dominating set $S$. A complete review of classical aspects on this topic can be found in \cite{HHS98}.

The broadcast domination is one of these variations and it was introduced in \cite{Liu68}. This definition tries to provide a model for several broadcast stations, with associated transmission powers, that can broadcast messages to places at distance greater than one. More recently in \cite{Erwin04}, the author took up this definition and showed new properties, such are a lower bound for the broadcast domination number in terms of the diameter of the graph, and relationships between small broadcast domination number and small radius and domination number.

The following definition of dominating broadcast can be found in \cite{Erwin04}.
For a graph $G$, a function $f\colon V(G)\to \{0,1,\dots ,diam(G)\}$, where $diam(G)$ denotes the diameter of $G$, is called a \emph{broadcast} on $G$.
A vertex $v\in V(G)$ with $f(v)>0$ is a \emph{f-dominating vertex} and it is said to \emph{f-dominate} every vertex $u$ with $d(u,v)\leq f(v)$.
A \emph{dominating broadcast} on $G$ is a broadcast $f$ such that every vertex in $G$ is $f$-dominated and the \emph{cost} of $f$ is $\omega(f)=\sum_{v\in V(G)} f(v)$. Finally, the \emph{dominating broadcast number} is
$$\gamma_{\stackrel{}{B}}(G)=\min\{ \omega(f)\colon f \text{ is a dominating broadcast on } G\}.$$

This generalization of classical domination has been studied from different angles. For instance, the role of the dominating broadcast number into the Domination Chain was studied in \cite{DEHHH06}, where authors define variations of the domination broadcast number, following the parameters that appear in this well-known inequality chain. Also, in \cite{CHM11,HM09,MW13}, a characterization of graphs where the dominating broadcast number reaches its natural upper bounds: radius and domination number, was obtained. Finally some results about the  computational complexity of the associated decision problem can be found in \cite{DDH09,HL06}. This last aspect is of particular interest, since, unlike usual with the domination parameters, in \cite{HL06} the authors prove that the computation of the broadcast domination number is a polynomial decision problem. This property contrasts with the fact that the computation of the domination number is a well-known NP-complete problem (see \cite{GJ79}) and this makes broadcast domination an interesting variation within the family of domination-type properties.

An open problem was proposed in \cite{DEHHH06}, regarding to consider a limited version of the broadcast function, that is $f\colon V(G)\to \{0,1,\dots , k\}$, with $1\leq k\leq diam(G)$. The motivation of this restricted version comes from the fact that the transmission power of the broadcast stations could be limited for physical reasons or, in other words, in big enough networks requiring transmission powers equal or close to $diam(G)$, could make no sense. In this paper, we follow this suggestion and study dominating $k$-broadcast functions. In Section~\ref{sec:spannig}, we present some basic properties, with the focus on the role of spanning trees regarding the associated parameter: the dominating $k$-broadcast number. In Section~\ref{sec:general_bound}, we present our main result of this paper, that provides a unified upper bound for the dominating $k$-broadcast number in terms of both $k$ and the order of the graph. Finally, in Section~\ref{sec:complexity}, we study the computational complexity of the problem of deciding if the dominating $k$-broadcast number of a graph is smaller than a given integer.

All graphs considered in this paper are finite, undirected, simple and connected.
The \emph{open neighborhood} of a vertex $v$, denoted by $N(v)$, is the set of its neighbors and its \emph{closed neighborhood} is $N[v]=N(v)\cup \{v\}$.
A \emph{leaf} is a vertex of degree one and its unique neighbor is a \emph{support vertex}. Leaves with the same support vertex are \emph{twin leaves}.
For a pair of vertices $u,v$ the distance $d(u,v)$ is the length of a shortest path joining them.
For any graph $G$, the \emph{eccentricity} of a vertex $u\in V(G)$ is
$\max \{ d(u,v) : v\in V(G) \}$ and it is denoted by $ecc_G(u)$.  The maximum (resp. minimum) of the eccentricities among all the vertices of $G$ is the \emph{diameter} (resp. \emph{radius}) of $G$, denoted by $diam (G)$ (resp. $rad (G)$). Two vertices $u$ and $v$ are \emph{antipodal} in $G$ if $d(u,v)=diam(G)$. For further undefined general concepts {of graph theory} see \cite{CLZ11}.

\section{Dominating $k$-broadcast}\label{sec:spannig}

We devote this section to study some basic properties of the dominating $k$-broadcast number. First of all, we present the formal definition that was suggested in \cite{DEHHH06} as an open problem in order to provide a model that could better reflect the real world situation of a transmitter network with antennas of limited power. This concept has been studied in the in \cite{CHMPP17,JK13} for the case $k=2$.

\begin{definition}
Let $G$ be a graph and let $k\geq 1$ be an integer. For any function $f\colon V(G)\to \{0,1,\dots, k\}$, we define the sets $V_f^0=\{u\in V(G)\colon f(u)=0\}$ and $V_f^+=\{v\in V(G)\colon f(v)\geq 1\}$. We say that $f$ is a dominating $k$-broadcast if for every $u\in V(G)$, there exists $v\in V_f^+$ such that $d(u,v)\leq f(v)$.

The cost of a dominating $k$-broadcast $f$ is $\omega(f)=\sum_{u\in V(G)} f(u)=\sum_{v\in V_f^+} f(v)$. Finally, the dominating $k$-broadcast number of $G$ is
$$\gamma_{\stackrel{}{B_k}}(G)=\min\{ \omega(f)\colon f \text{ is a k-dominating broadcast on } G\}.$$
Moreover, a dominating $k$-broadcast with cost $\gamma_{\stackrel{}{B_k}}(G)$ is called optimal.
\end{definition}

It is clear from the definition that $\gamma(G)=\gamma_{\stackrel{}{B_1}}(G)$, $\gamma_{\stackrel{}{B}}(G)\leq \gamma_{\stackrel{}{B_k}}(G)$ and $\gamma_{\stackrel{}{B_{k+1}}}(G)\leq\gamma_{\stackrel{}{B_{k}}}(G)$, for $k\geq 1$.
For technical reasons, we consider any value of $k$ in our definition instead of limiting it to the diameter of the graph (see Remark~\ref{rem:values of k}), although the parameter $\gamma_{\stackrel{}{B_k}}(G)$ agrees with $\gamma_{\stackrel{}{B}}(G)$ for large enough values of $k$.

If {$r=rad(G)$}, then the function $f\colon V(G)\to \{0,1,\dots,r\}$ satisfying $f(v)=r$ for a central vertex $v$ and $f(u)=0$ if $u\neq v$, is both a dominating broadcast and a dominating $k$-broadcast, for every $k\geq r$.
Therefore, a dominating broadcast on minimum cost must be also a dominating $r$-broadcast so $\gamma_{\stackrel{}{B}}(G)=\min\{ \omega(f) \colon f \text{ is a r-dominating broadcast on } G\}=\gamma_{\stackrel{}{B_r}}(G)$.
Moreover, if $k\geq r$ then a dominating $k$-broadcast on minimum cost must be also a dominating $r$-broadcast, so in this case we obtain that
$$\gamma_{\stackrel{}{B_{k}}}(G)=\min\{ \omega(f) \colon f \text{ is a r-dominating broadcast on } G\}=\gamma_{\stackrel{}{B_r}}(G).$$

All these relationships can be summarized in the following chain of inequalities:

{$$\gamma_{\stackrel{}{B}}(G)=\gamma_{\stackrel{}{B_r}}(G) \leq \gamma_{\stackrel{}{B_{r-1}}}(G)\leq \dots \leq \gamma_{\stackrel{}{B_2}}(G)\leq \gamma_{\stackrel{}{B_1}}(G)=\gamma (G).$$}

Note that all the parameters in the chain can be different, as we show in the next example. In Figure~\ref{fig:chain}, we show a graph $G$ with radius equal to four, so  $\gamma_{\stackrel{}{B}}(G)=\gamma_{\stackrel{}{B_4}}(G)$.  Circled vertices in Figure~\ref{fig:chain}~(a) (resp. (b) and (c)) are vertices with non-zero image in an optimal dominating $4$-broadcast (resp. dominating $3$-broadcast and dominating $2$-broadcast), and images of such vertices are also shown. Circled vertices in Figure~\ref{fig:chain}~(d) are a minimum dominating set. Therefore, this graph satisfies
$$\gamma_{\stackrel{}{B}}(G)=\gamma_{\stackrel{}{B_4}}(G)=4< \gamma_{\stackrel{}{B_{3}}}(G)=5<\gamma_{\stackrel{}{B_2}}(G)=6< \gamma (G)=7.$$

\begin{figure}[h]
\begin{center}
\includegraphics [width=0.21\textwidth]{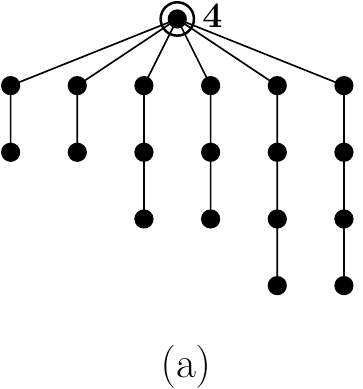} \hspace{0.5cm}
\includegraphics [width=0.21\textwidth]{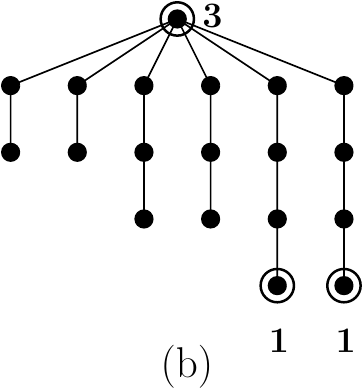}\hspace{0.5cm}
\includegraphics [width=0.21\textwidth]{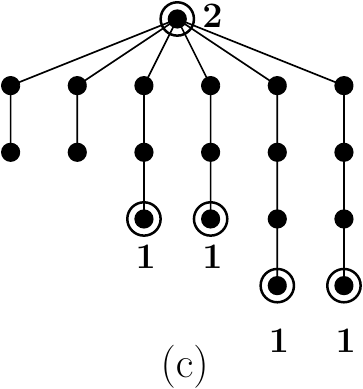} \hspace{0.5cm}
\includegraphics [width=0.21\textwidth]{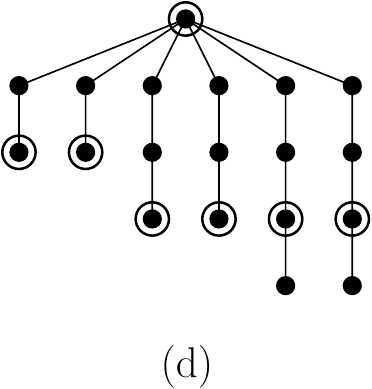}
\caption{$\gamma_{\stackrel{}{B}}(G)=\gamma_{\stackrel{}{B_4}}(G)=4< \gamma_{\stackrel{}{B_{3}}}(G)=5<\gamma_{\stackrel{}{B_2}}(G)=6< \gamma (G)=7$.
}\label{fig:chain}
\end{center}
\end{figure}

We next present some general properties of these parameters. The first one, shown in the following proposition, is a generalization of a similar property for dominating $2$-broadcast, and it can be found in \cite{CHMPP17}. Despite the proof for this general case follows the same arguments, we include it for the sake of completeness.

\begin{proposition}\label{prop:general}
Let $G$ be a graph and let $k\geq 1$ be an integer.
\begin{enumerate}
\item If $e$ is a cut-edge of $G$ and
$G_1,G_2$ are   the connected components of $G-e$, then
$$\gamma_{\stackrel{}{B_k}}(G)\leq \gamma_{\stackrel{}{B_k}}(G_1)+\gamma_{\stackrel{}{B_k}}(G_2).$$

\item There exists an optimal dominating $k$-broadcast $f$ such that $f(u)=0$, for every leaf $u$ of $G$.
\end{enumerate}
\end{proposition}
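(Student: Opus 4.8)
For part~(1), the plan is to take optimal dominating $k$-broadcasts $f_1$ on $G_1$ and $f_2$ on $G_2$ and glue them into a single broadcast $f$ on $G$ by setting $f(v)=f_i(v)$ whenever $v\in V(G_i)$. Since $G_1$ and $G_2$ partition $V(G)$, the function $f$ is well defined and $\omega(f)=\omega(f_1)+\omega(f_2)=\gamma_{B_k}(G_1)+\gamma_{B_k}(G_2)$. The only thing to check is that $f$ is a dominating $k$-broadcast on $G$: any $u\in V(G)$ lies in some $V(G_i)$, and there is $v\in V_{f_i}^+$ with $d_{G_i}(u,v)\le f_i(v)$; since $G_i$ is a subgraph of $G$ we have $d_G(u,v)\le d_{G_i}(u,v)\le f(v)$, so $u$ is $f$-dominated. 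Hence $\gamma_{B_k}(G)\le\omega(f)$, which is the claimed inequality. (No obstacle here; this is the routine "add up solutions on blocks" argument.)

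For part~(2), I would start from an optimal dominating $k$-broadcast $f$ and, among all optimal broadcasts, pick one minimizing the number of leaves $u$ with $f(u)>0$ (or, equivalently, minimizing $\sum_{u\text{ leaf}} f(u)$, which will also serve as the descent quantity). Suppose for contradiction that some leaf $u$ has $f(u)=m\ge 1$, and let $w$ be its support vertex. The idea is to "push the broadcast inward": define $f'$ by $f'(u)=0$, $f'(w)=\max\{f(w),\,\min\{m,k\}\}$ — note $m\le k$ already, so this is just $f'(w)=\max\{f(w),m\}$ — and $f'(x)=f(x)$ otherwise. Then $\omega(f')\le\omega(f)$ because we remove $m$ from $u$ and add at most $m$ to $w$. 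One checks $f'$ is still dominating: every vertex $f$-dominated by some $v\ne u$ is still dominated by $v$ under $f'$ (values only increased off $u$); and every vertex $z$ that was dominated only by $u$ satisfies $d(z,u)\le m$, hence, since the unique neighbor of $u$ is $w$, the shortest $u$–$z$ path passes through $w$ and $d(z,w)=d(z,u)-1\le m-1<m\le f'(w)$, so $z$ is dominated by $w$. (The vertex $u$ itself is dominated by $w$ since $d(u,w)=1\le f'(w)$.) Thus $f'$ is an optimal dominating $k$-broadcast with strictly fewer positive leaves (the leaf $u$ became $0$ and no new leaf got a positive value, as $w$, having a leaf neighbor as well as — by connectivity and $|V(G)|\ge 2$ — at least the leaf $u$, is a support vertex and in particular not a leaf unless $G=K_2$, a trivial case handled separately). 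This contradicts the choice of $f$, completing the proof.

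The one point needing a little care — and the closest thing to an obstacle — is the degenerate case $G=K_2$: then both vertices are leaves, and an optimal dominating $k$-broadcast assigns $1$ to one vertex and $0$ to the other, so the statement "$f(u)=0$ for every leaf" literally fails. I would either explicitly exclude $K_2$, or phrase part~(2) for graphs of order at least $3$, or observe that in $K_2$ one vertex still receives $0$ and the statement is vacuous-enough for the intended applications; the paper's later use of the proposition (on spanning trees) will dictate the cleanest convention. Apart from that boundary case, the argument is a direct one-step local exchange and contains no real difficulty.
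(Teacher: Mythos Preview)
Your proposal is correct and follows essentially the same approach as the paper: glue optimal broadcasts on the two components for part~(1), and push any positive leaf value onto its support vertex for part~(2). The paper's version of part~(2) is slightly terser---it first observes that optimality forces $f(v)=0$ at the support and then simply swaps the two values, iterating rather than arguing by minimality---and your flagging of the $K_2$ degeneracy is a valid edge case the paper does not address.
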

\proof
\begin{enumerate}
\item Let $f_1,f_2$ be optimal dominating $k$-broadcast on $G_1$ and $G_2$, respectively.
Then, the function $f\colon V(G)\to \{0,1,\dots ,k\}$ such that $f\vert_{V(G_i)}=f_i$, is a dominating $k$-broadcast on $G$ with cost $\omega(f)= \omega(f_1)+\omega(f_2)=\gamma_{\stackrel{}{B_k}}(G_1)+\gamma_{\stackrel{}{B_k}}(G_2)$.

\item Let $f$ be an optimal dominating $k$-broadcast on $G$ an suppose that there exists a leaf $u$, with support vertex $v$, such that $f(u)>0$.
Notice that the optimality of $f$ gives $f(v)=0$. Consider now the function $g\colon V(G)\to \{0,1,\dots ,k\}$ satisfying $g(u)=0$, $g(v)=f(u)$, and $g(w)=f(w)$ if $w\neq u,v$. Clearly, $g$ is a dominating $k$-broadcast with cost $\omega(g)= \omega(f)$, and $g(u)=0$. By repeating this procedure as many times as necessary, we obtain the desired optimal dominating $k$-broadcast. \qed
\end{enumerate}

\par\bigskip
\begin{remark}\label{rem:values of k}
The definition of dominating $k$-broadcast as a function with range set $\{0,1,\dots ,k\}$, not depending on the diameter of the graph, ensures that the first property described in Proposition~\ref{prop:general} makes sense even if the connected components have small diameters. That is the reason why we define the dominating $k$-broadcast for any value of $k\geq 1$.
\end{remark}

\par\bigskip
We now present a property of the dominating $k$-broadcast number related to trees that will be useful in the rest of the paper. To this end, we need the following notation.

Let $T$ be a tree of order at least $3$. We define the \emph{twin-free tree associated} to $T$, and we denote it by $T^*$, as the tree obtained from $T$ by deleting all but one of the leaves of every maximal set of pairwise twin leaves. It is clear that both trees have the same radius, that is, $rad(T)=rad(T^*)$.

\par\bigskip

\begin{proposition}\label{prop:twins}
Let $T^*$ be the twin-free tree associated to a tree $T$ and let $k\ge 1$.
Then, $\gamma_{\stackrel{}{B_k}} (T)=\gamma_{\stackrel{}{B_k}} (T^*)$.
\end{proposition}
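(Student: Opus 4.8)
The plan is to prove the two inequalities $\gamma_{B_k}(T)\le \gamma_{B_k}(T^*)$ and $\gamma_{B_k}(T)\ge \gamma_{B_k}(T^*)$ separately, reasoning leaf-set by leaf-set so that a single maximal class of twin leaves is handled and the rest proceeds by induction on the number of such classes (or on $|V(T)|-|V(T^*)|$). Fix a maximal set $L=\{u_1,\dots ,u_t\}$ of pairwise twin leaves of $T$ with common support vertex $v$, with $t\ge 2$, and let $T'=T-\{u_2,\dots ,u_t\}$, so that iterating the argument eventually yields $T^*$; it suffices to show $\gamma_{B_k}(T)=\gamma_{B_k}(T')$.

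For $\gamma_{B_k}(T)\le\gamma_{B_k}(T')$, I would take an optimal dominating $k$-broadcast $f'$ on $T'$ and, by Proposition~\ref{prop:general}(2), assume $f'(u_1)=0$. Define $f$ on $T$ by $f(x)=f'(x)$ for $x\in V(T')$ and $f(u_i)=0$ for $i\ge 2$. Every new vertex $u_i$ is at distance $2$ from $u_1$ and at distance $1$ from $v$, hence has exactly the same set of vertices within any given distance as $u_1$ does (inside $T$, and $T$ adds nothing outside); since $f'$ dominates $u_1$ via some $w$ with $d_{T'}(u_1,w)\le f'(w)$, and $d_T(u_i,w)=d_T(u_1,w)=d_{T'}(u_1,w)$, the same $w$ dominates $u_i$. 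Thus $f$ is a dominating $k$-broadcast on $T$ with $\omega(f)=\omega(f')$, giving the inequality.

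For the reverse inequality $\gamma_{B_k}(T)\ge\gamma_{B_k}(T')$, start from an optimal $f$ on $T$ which, again by Proposition~\ref{prop:general}(2), we may assume vanishes on all leaves; in particular $f(u_i)=0$ for all $i$. Its restriction $g=f|_{V(T')}$ is a function $V(T')\to\{0,\dots,k\}$ with $\omega(g)\le\omega(f)$ (equality in fact, since $f$ is zero on the deleted leaves), and one checks it dominates every vertex of $T'$: any $x\in V(T')$ is dominated in $T$ by some $w$ with $f(w)>0$, and $w$ is not one of the deleted leaves, so $w\in V(T')$; a shortest $x$–$w$ path in $T$ cannot pass through a leaf $u_i$ as an internal vertex, so $d_{T'}(x,w)=d_T(x,w)\le f(w)=g(w)$. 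Hence $g$ is a dominating $k$-broadcast on $T'$ of cost $\le\omega(f)=\gamma_{B_k}(T)$, and the two bounds together give equality for one deleted class; induction on the number of maximal twin-leaf classes then yields $\gamma_{B_k}(T)=\gamma_{B_k}(T^*)$.

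The only genuinely delicate point — and the one I would write out most carefully — is the claim that distances are unchanged by adding or deleting twin leaves, i.e. that for $x,w\in V(T')$ one has $d_{T'}(x,w)=d_T(x,w)$, and that a deleted leaf never lies on a relevant shortest path. Both are immediate from the fact that a leaf has degree one, but they are exactly what makes the domination condition transfer in both directions, so they deserve an explicit sentence rather than being taken for granted. Everything else is bookkeeping, and Proposition~\ref{prop:general}(2) does the real work of letting us assume our optimal broadcasts are zero on leaves.
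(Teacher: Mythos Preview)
Your proof is correct and follows essentially the same approach as the paper: use Proposition~\ref{prop:general}(2) to assume an optimal dominating $k$-broadcast vanishes on all leaves, then restrict to the smaller tree in one direction and extend by zeros in the other. The only structural difference is that the paper goes directly from $T$ to $T^*$ in a single step, whereas you peel off one maximal twin class at a time and finish by induction; your version is a bit more explicit about why distances are preserved, but neither the idea nor the key lemma differs.
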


\begin{proof}
Let $f$ be an optimal dominating $k$-broadcast on $T$ that assigns the value $0$ to all its leaves.
Then, the restriction $f^*$ of $f$ to the set $V(T^*)$ is a dominating $k$-broadcast on $T^*$ such that $\gamma_{\stackrel{}{B_k}}(T^*)\leq \omega (f^*)=\omega (f)=\gamma_{\stackrel{}{B_k}}(T)$.

Reciprocally, let $f^*$ be an optimal dominating $k$-broadcast on $T^*$ that assigns the value $0$ to all its leaves and define $f\colon V(T)\to \{0,1,\dots, k\}$ such that $f(v)=f^*(v)$ if $v\in V(T^*)$ and $f(u)=0$ if $u\in V(T)\setminus V(T^*)$.
Then, $f$ is a dominating $k$-broadcast on $T$ satisfying $\gamma_{\stackrel{}{B_k}}(T)\leq \omega (f)=\omega (f^*)=\gamma_{\stackrel{}{B_k}}(T^*)$.
\end{proof}

Our main purpose in this paper is to provide an upper bound for the dominating $k$-broadcast number in every graph. Following the ideas in \cite{Herke07}, we first study the role of spanning trees in the computation of this parameter. In particular, we are interested in the following result.

\begin{theorem}\cite{Herke07}\label{thm:herke}
Let $G$ be a graph.
Then,
$$\gamma_{\stackrel{}{B}}(G)=\min \{ \gamma_{\stackrel{}{B}}(T)\colon T \text{ is a spanning tree of } G\}\, .$$
\end{theorem}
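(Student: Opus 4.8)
The plan is to prove the two inequalities separately. For the easy direction, $\gamma_{\stackrel{}{B}}(G)\leq \min\{\gamma_{\stackrel{}{B}}(T)\colon T\text{ spanning tree of }G\}$, I would take any spanning tree $T$ of $G$ and an optimal dominating broadcast $f$ on $T$. Since distances in $G$ are at most distances in $T$ (removing edges cannot shorten paths), every vertex that is $f$-dominated in $T$ is also $f$-dominated in $G$; one only has to be slightly careful that the range $\{0,1,\dots,diam(T)\}$ may exceed $\{0,1,\dots,diam(G)\}$, but this is harmless because any value larger than $rad(G)$ can be capped at $rad(G)$ without losing domination and without increasing cost, so $f$ (suitably capped) is a dominating broadcast on $G$ with $\omega_G(f)\leq\omega_T(f)=\gamma_{\stackrel{}{B}}(T)$. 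Taking the minimum over all spanning trees $T$ gives the inequality.

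The substantive direction is $\gamma_{\stackrel{}{B}}(G)\geq \gamma_{\stackrel{}{B}}(T)$ for some spanning tree $T$. Here I would start from an optimal dominating broadcast $f$ on $G$, with $f$-dominating vertices $v_1,\dots,v_m$ having powers $f(v_i)=r_i$, so $\sum_i r_i=\gamma_{\stackrel{}{B}}(G)$. The idea is to build a spanning tree in which each ball $B_G(v_i,r_i)$ is realized, or better, for which $f$ restricted to the $v_i$ (possibly with reduced powers) is still dominating. Concretely, for each $i$ grow a BFS tree $T_i$ of $G$ rooted at $v_i$ truncated at depth $r_i$; this spanning forest of $\bigcup_i B_G(v_i,r_i)=V(G)$ covers all vertices, and within $T_i$ every vertex is at tree-distance $\leq r_i$ from $v_i$. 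Then extend the union $\bigcup_i T_i$ to a spanning tree $T$ of $G$ by deleting edges to break cycles, being careful to delete only edges that are not needed to keep some vertex within distance $r_i$ of its $v_i$ in the tree; a cleaner way is to contract/prune so that each vertex keeps exactly one "parent path" toward some root $v_i$ of length $\leq r_i$. On the resulting spanning tree $T$, the same function $f$ (with the same powers $r_i$) is a dominating broadcast, so $\gamma_{\stackrel{}{B}}(T)\leq\omega_T(f)=\omega_G(f)=\gamma_{\stackrel{}{B}}(G)$, which combined with the easy direction finishes the proof.

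The main obstacle is the spanning-tree surgery in the second direction: naively taking the union of truncated BFS trees $T_i$ gives a connected spanning subgraph but typically not a tree, and when one removes edges to kill cycles one must guarantee that every vertex $u$ still has, in the final tree $T$, some $v_i$ with $d_T(u,v_i)\leq r_i$. The right way to handle this is to process the $v_i$ one at a time: maintain a growing forest, and when adding the ball around $v_i$ only attach those vertices not already covered, via shortest paths from $v_i$ inside its ball, so that no cycle is ever created and each newly added vertex sits at tree-distance $\leq r_i$ from $v_i$; finally add any remaining edges of $G$ needed to connect leftover components into one tree (there are none, since the balls cover $V(G)$, but if the attachment process left several trees one joins them arbitrarily, which only decreases distances relative to what we need). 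I would also invoke Proposition~\ref{prop:general}(1) or a direct argument to reduce to connected pieces if convenient, but the core of the argument is this careful ball-by-ball tree construction. Since this is exactly the argument of Herke~\cite{Herke07}, I would present it at this level of detail and refer to that source for the routine verifications.
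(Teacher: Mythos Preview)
The paper does not give its own proof of this theorem; it is quoted from \cite{Herke07}. What the paper does say, immediately after the statement, is that Herke's proof uses an \emph{efficient} optimal dominating broadcast (one in which every vertex is $f$-dominated by exactly one $v_i$), a tool that is available for ordinary broadcasts by \cite{DEHHH06}. With efficiency the balls $B_G(v_i,f(v_i))$ partition $V(G)$, so the BFS trees $T_i$ you describe are automatically vertex-disjoint; one simply takes each $T_i$, adds $m-1$ edges of $G$ to connect them, and obtains a spanning tree on which $f$ is still dominating. No ``surgery'' is needed.

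Your proposal instead attempts the hard direction without efficiency, via a ball-by-ball incremental construction. That route can be made to work, and it is essentially what the paper does later in the proof of Theorem~\ref{thm:spanning} for $k$-broadcasts (where efficient optimal broadcasts need not exist). But your sketch has a genuine gap at the key step: when you ``only attach those vertices not already covered, via shortest paths from $v_i$ inside its ball,'' a shortest $(v_i,u)$-path may pass through vertices already assigned to some earlier $T_j$, so you cannot add that path without creating a cycle, and rerouting may destroy the bound $d_T(v_i,u)\le r_i$. Resolving this requires the kind of careful subtree-pruning argument carried out in the proof of Theorem~\ref{thm:spanning} (comparing depths of the two competing subtrees at each collision and deleting the shallower one), which is considerably more delicate than what you wrote. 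For the present theorem the efficient-broadcast shortcut is both simpler and faithful to the cited source; if you want to avoid it, you should either reproduce the collision-resolution argument in full or point to Theorem~\ref{thm:spanning} as the general case.
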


An optimal dominating broadcast in a graph is called \emph{efficient} if every vertex $u$ in $G$ is $f$-dominated by exactly one vertex $v$ with $f(v)>0$ and it is known that every graph admits one of them (see \cite{DEHHH06}). This particular type of optimal dominating broadcast is used in the proof of the above theorem. Unfortunately, this property does not hold in general for dominating $k$-broadcasts. In Figure~\ref{fig:non_efficient}, we show a graph $G$ such that $\gamma_{\stackrel{}{B_3}}(G)=4$. It has exactly two non-isomorphic optimal dominating $3$-broadcasts, none of them being efficient.

\begin{figure}[htbp]
\begin{center}
\includegraphics[width=0.4\textwidth]{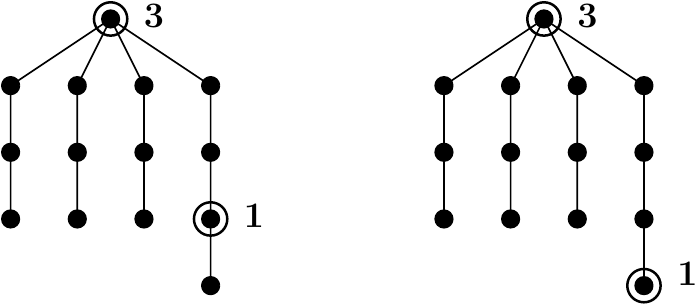}
\caption{Graph $G$ has no efficient dominating $3$-broadcast.}
\label{fig:non_efficient}
\end{center}
\end{figure}

A similar result to that of Theorem~\ref{thm:herke} in the case of dominating $2$-broadcast can be found in \cite{CHMPP17}, where its authors use an appropriate technique that avoids efficient dominating $2$-broadcast. We follow these ideas to extend the result to any value of $k\geq 3$.

\begin{theorem}\label{thm:spanning}
Let $G$ be a graph and let $k\geq 3$ be an integer.
Then,
$$\gamma_{\stackrel{}{B_k}}(G)=\min \{ \gamma_{\stackrel{}{B_k}}(T)\colon T \text{ is a spanning tree of } G\}.$$
\end{theorem}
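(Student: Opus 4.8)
The plan is to prove the two inequalities separately. The inequality $\gamma_{B_k}(G)\le \min\{\gamma_{B_k}(T)\colon T \text{ spanning tree of } G\}$ is the easy direction: if $T$ is any spanning tree of $G$ and $f$ is an optimal dominating $k$-broadcast on $T$, then since distances in $G$ never exceed distances in $T$, every vertex that is $f$-dominated in $T$ is also $f$-dominated in $G$, so $f$ is a dominating $k$-broadcast on $G$ with the same cost. Hence $\gamma_{B_k}(G)\le\gamma_{B_k}(T)$ for every spanning tree $T$, and in particular for the minimizing one. The content is all in the reverse inequality: given an optimal dominating $k$-broadcast $f$ on $G$, I must build a spanning tree $T$ of $G$ on which $f$ (or something no more expensive) is still dominating.

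First I would fix an optimal dominating $k$-broadcast $f$ on $G$ and let $V_f^+=\{v_1,\dots,v_m\}$ be its set of dominating vertices with radii $r_i=f(v_i)\ge 1$. For each $i$, the closed ball $B_i=\{u\colon d_G(u,v_i)\le r_i\}$ is covered by a BFS tree rooted at $v_i$ restricted to depth $r_i$; call it $S_i$, a tree in $G$ in which $d_{S_i}(v_i,u)=d_G(v_i,u)\le r_i$ for all $u\in B_i$. The union $\bigcup_i S_i$ spans $V(G)$ (since $f$ dominates every vertex) but need not be a tree — it may be disconnected or have cycles. I would then extend/prune this subgraph to a spanning tree $T$ of $G$, taking care that in the extension step the depths within each $S_i$ are preserved, i.e. $d_T(v_i,u)\le r_i$ for every $u\in B_i$. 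Concretely, one processes the $S_i$ one at a time: start with $S_1$; to incorporate $S_{i+1}$, add only those vertices of $B_{i+1}$ not yet present, together with tree edges connecting them toward $v_{i+1}$ along the chosen BFS structure, discarding any edge that would create a cycle; finally connect any leftover components of $G$ into $T$ by arbitrary edges. Since no edge is ever added inside an already-built ball in a way that shortens the root-to-vertex path, we keep $d_T(v_i,u)\le d_G(v_i,u)\le r_i$ for all $u\in B_i$ and all $i$. Then $f$, viewed as a function on $V(T)$, is a dominating $k$-broadcast on $T$ of the same cost, so $\gamma_{B_k}(T)\le\omega(f)=\gamma_{B_k}(G)$, and combining with the easy direction finishes the proof.

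The main obstacle is exactly the delicate part of the construction: merging the BFS balls into a single spanning tree \emph{without ever increasing} any distance $d(v_i,u)$ for $u\in B_i$. The naive "take a spanning tree of $\bigcup_i S_i$" does not work, because a spanning tree of the union could use detour edges that blow up some $d_T(v_i,u)$ beyond $r_i$, destroying domination. This is the same phenomenon that forces the $2$-broadcast argument of \cite{CHMPP17} to avoid efficient broadcasts, and it is why $k\ge 3$ is handled by imitating that technique rather than the efficient-broadcast proof of Theorem~\ref{thm:herke}. The right way to control it is to fix, once and for all, for each dominated vertex $u$ one dominator $v_{i(u)}$ and one shortest $v_{i(u)}$--$u$ path $P_u$ in $G$, and to build $T$ as a union of initial segments of these paths, adding the edges of $P_u$ greedily and skipping an edge only when both its endpoints already lie in the same component of the partial tree. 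A short induction on the order in which edges are processed shows that at every stage the partial graph is a forest in which each vertex $u$ already inserted satisfies $d(v_{i(u)},u)\le r_{i(u)}$ within its component; once all paths are processed every vertex is present, and one adds arbitrary remaining edges of $G$ to connect the forest into a spanning tree, which cannot decrease — hence cannot violate — the already-achieved bounds. I would also invoke Proposition~\ref{prop:general}(2) if convenient to first normalise $f$ so that leaves of the eventual tree get value $0$, though this is not strictly necessary for the distance argument.
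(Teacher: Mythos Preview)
Your overall strategy matches the paper's: the easy direction is identical, and for the hard direction both you and the paper start from an optimal $k$-broadcast $f$, grow BFS trees from the vertices of $V_f^+$, and try to merge them into a spanning tree on which $f$ still dominates. The gap is in the merge step. Your claimed invariant --- that after inserting the edges of $P_u$ one has $d_{\text{forest}}(v_{i(u)},u)\le r_{i(u)}$ --- does not follow from the greedy rule: when an edge $w_{j-1}w_j$ of $P_u$ is skipped because its endpoints already lie in one component, nothing in your induction bounds $d_{\text{forest}}(v_{i(u)},w_j)$; the invariant only controls $d_{\text{forest}}(v_{i(w_j)},w_j)$, and $i(w_j)$ need not equal $i(u)$. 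Concretely, take $f(v_1)=3$, $f(v_2)=2$ on the graph with edges $v_1a,\,ab,\,bu,\,v_2x,\,xa,\,v_2y,\,yb$. With $P_a=v_2xa$, $P_b=v_2yb$, $P_u=v_1abu$ processed in that order, your rule skips $ab$ (since $a$ and $b$ are already joined through $a\!-\!x\!-\!v_2\!-\!y\!-\!b$) and the resulting tree has $d_T(v_1,u)=6$ and $d_T(v_2,u)=3$, so $u$ is $f$-dominated by neither. This $f$ happens not to be optimal, but your ``short induction'' never invokes optimality, so the argument as written is broken regardless.

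The paper closes this gap with two ingredients you are missing. First, it orders $V_f^+$ so that $f(v_1)\le\dots\le f(v_m)$ and uses \emph{optimality of $f$} explicitly to dispose of the case where some root $v_i$ lies inside another ball $B(v_\ell)$ (if the subtree of $T'_\ell$ below $v_i$ were deep enough, one could drop $v_i$ from $V_f^+$ and lower the cost). Second --- and this is the real mechanism --- when two BFS trees $T'_r$ and $T'_i$ overlap at a vertex $x$, the paper compares the depths $d_r$ and $d_i$ of the subtrees rooted at $x$ in each and deletes the \emph{shallower} one; a two-line triangle-inequality check then shows that every vertex of the deleted subtree is still within $f$-reach of the surviving root. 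This depth comparison is exactly what forces the pruned trees $T_1,\dots,T_m$ to become vertex-disjoint while each retains radius at most $f(v_i)$ from its root, a property your cycle-avoiding greedy rule does not secure.
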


\proof
It is clear that an optimal dominating $k$-broadcast on any spanning tree of $G$ is a dominating $k$-broadcast on $G$, thus $$\gamma_{\stackrel{}{B_k}}(G)\leq \min \{ \gamma_{\stackrel{}{B_k}}(T)\colon T \text{ is a spanning tree of } G\}.$$

We now focus on the reverse inequality. Let $g\colon V(G)\to \{0,1,\dots ,k\}$ be an optimal dominating $k$-broadcast on $G$, so $\omega(g)=\gamma_{\stackrel{}{B_k}}(G)$. We are going to build an spanning tree $T$ of $G$ such that $g$ is a dominating $k$-broadcast on $T$.

Let $V_g^+=\{v_1,\dots ,v_m\}$ where $1 \le g(v_1)\leq g(v_2)\leq \dots \leq g(v_m)$. For every $i\in \{1,2,\dots ,m\}$ and $j\in \{0,\dots, g(v_i)\}$ consider the following sets
$$L_j(v_i)=\{ u\in V(G)\colon d(u,v_i)=j\} \text{ and } B(v_i)=\bigcup _{j=0}^{g(v_i)} L_j(v_i).$$

Let $T'_i$ be the tree rooted in $v_i$ with vertex set $B(v_i)$, obtained by keeping a minimal set of edges of $G$ ensuring that $d_{T'_i}(v_i,x)=d_G(v_i,x)$ and deleting the remaining edges.
If $V(T'_1),V(T'_2),\dots , V(T'_m)$ are pairwise disjoint sets, then define $T_i=T'_i$.
Otherwise, we modify the trees $T'_i$ in the following way.

Firstly, suppose that $v_i\in V(T'_\ell)$ with $i>\ell$, denote by $T'_\ell(v_i)$ the subtree of $T'_\ell$ rooted in $v_i$ and let $a$ be the distance {from} $v_i$ to the furthest leaf of $T'_\ell(v_i)$.
If $y\in V(T'_\ell(v_i))$, then
$$d_G(v_i,y)\leq d_{T'_\ell(v_i)}(v_i,y)\leq a\leq g(v_\ell)\leq g(v_i).$$
Therefore, $y\in B(v_i)$ and in this case, we modify the tree $T'_\ell$ by deleting the subtree $T'_\ell(v_i)$.

Now suppose that $v_i\in V(T'_\ell)$ with $i<\ell$, denote by $T'_\ell(v_i)$ the subtree of $T'_\ell$ rooted in $v_i$ and let $b$ be the distance from $v_i$ to the furthest leaf of $T'_\ell(v_i)$.
Suppose that $g(v_i)\leq b$ and let $y\in V(T'_i)$.
Then,
\begin{align*}
d_G(v_\ell,y)& \leq d_G(v_\ell,v_i)+d_G(v_i,y)\leq d_{T'_\ell}(v_\ell,v_i)+d_{T'_i}(v_i,y)\leq \\
& \leq  d_{T'_\ell}(v_\ell,v_i)+g(v_i)\leq d_{T'_\ell}(v_\ell,v_i)+ b \leq g(v_\ell).
\end{align*}
Therefore, $y\in B(v_\ell)$ but in this case the function $h\colon V(G)\to \{0,1,\dots, k\}$ satisfying $h(v_i)=0$ and $h(v)=g(v)$ if $v\neq v_i$ is a dominating $k$-broadcast of $G$ with $\omega(h)<\omega (g)$ which is not possible because $g$ is optimal.
Hence, $b<g(v_i)$ and if $y\in V(T'_\ell(v_i))$, then $d_G(v_i,y)\leq d_{T'_\ell(v_i)}(v_i,y)\leq b < g(v_i)$ so $y\in B(v_i)$.
In this case, we modify the tree $T'_\ell$ by deleting its subtree $T'_\ell(v_i)$.
In the rest of the proof, we may assume that $v_i\in V(T'_\ell)$ if and only if $i=\ell$.

Suppose now that, for $i\geq 2$, $V(T'_i)\cap \big(\bigcup _{r=1}^{i-1} V(T'_r)\big)\neq \emptyset$, being $V(T'_1),\dots ,$ $V(T'_{i-1})$ pairwise disjoint and let $j\in \{1,2,\dots ,g(v_i)\}$ be the smallest index such that $L_j(v_i)\cap \big(\bigcup _{r=1}^{i-1} V(T'_r)\big)\neq \emptyset$. If $x\in L_j(v_i)\cap \big(\bigcup _{r=1}^{i-1} V(T'_r)\big)$ then, there exists a unique $r\in \{1,\dots ,i-1\}$ such that $x\in L_j(v_i)\cap V(T'_r)$.
Denote by $T'_r(x)$ the subtree of $T'_r$ rooted in $x$ and by $d_r$  the distance {from} $x$ to the furthest leaf of $T'_r(x)$.
Similarly, denote by $T'_i(x)$ the subtree of $T'_i$ rooted in $x$ and by $d_i$  the distance {from} $x$ to the furthest leaf of $T'_i(x)$.

If $d_r\leq d_i$, then every vertex $y\in V(T'_r(x))$ satisfies
\begin{align*}
d_G(v_i,y)& \leq d_G(v_i,x)+d_G(x,y)\leq d_{T'_i}(v_i,x)+d_{T'_r(x)}(x,y)\leq \\
& \leq d_{T'_i}(v_i,x)+d_r\leq d_{T'_i}(v_i,x)+d_i\leq g(v_i)
\end{align*}
so $y\in V(T'_i)$ and in this case, we modify the tree $T'_r$ by deleting its subtree $T'_r(x)$.
If, to the contrary, $d_r>d_i$, then every vertex $y\in V(T'_i(x))$ satisfies
\begin{align*}
d_G(v_r,y) & \leq d_G(v_r,x)+d_G(x,y)\leq d_{T'_r}(v_r,x)+d_{T'_i(x)}(x,y)\leq \\
& \leq d_{T'_r}(v_r,x)+d_i< d_{T'_r}(v_r,x)+d_r\leq g(v_r)
\end{align*}
so $y\in V(T'_r)$ and then we modify the tree $T'_i$ by deleting its subtree $T'_i(x)$.

We proceed in the same way for every vertex in $L_j(v_i)\cap \big(\bigcup _{r=1}^{i-1} V(T'_r)\big)$ and then we recursively repeat this process {for} $\ell \in \{j+1, \dots , g(v_i)\}$, which is the smallest index such that $L_{\ell}(v_i)\cap \big(\bigcup _{r=1}^{i-1} V(T'_r)\big)\neq \emptyset$, until we obtain that $V(T'_1),V(T'_2), \dots , V(T'_i)$ are pairwise disjoint.
We repeat this procedure as many times as necessary until we get a family of trees $T_1,\dots, T_m$ such that  $V(T_1),\dots , V(T_m)$ provide a partition of $V(G)$, $v_i\in V(T_i)$ for every $i\in\{1,\ldots, m\}$ and $d_{T_i}(v_i,z)\leq g(v_i)$ for every $z\in V(T_i)$.

Finally, it is possible to construct a spanning tree $H$ of $G$ by adding some edges of $G$
to $T_1,T_2,\dots , T_m$  in order to get a connected graph with no cycles.
The property $d_{T_i}(v_i,x)\leq g(v_i)$ for every $x\in V(T_i)$, ensures that $g\colon V(H)(=V(G))\to \{0,1,\dots ,k\}$ is a dominating $k$-broadcast on the spanning tree $H$, so
$$\min \{ \gamma_{\stackrel{}{B_k}}(T)\colon T \text{ is a spanning tree of } G\}\leq \gamma_{\stackrel{}{B_k}}(H)\leq \omega (g)=\gamma_{\stackrel{}{B_k}}(G). \text{ \rlap{$ \qquad \qquad  \quad  \ \  \Box$} }$$

\section{A general upper bound on $\gamma_{\stackrel{}{B_{k}}}(G)$}\label{sec:general_bound}

In this section, we present the main result of the paper, that provides a general upper bound for the dominating $k$-broadcast number. Our motivation comes from the chain shown in Section~\ref{sec:spannig} for a graph $G$ with $rad(G)=r$
$$\gamma_{\stackrel{}{B}}(G)=\gamma_{\stackrel{}{B_r}}(G) \leq \gamma_{\stackrel{}{B_{r-1}}}(G)\leq \dots \leq \gamma_{\stackrel{}{B_2}}(G)\leq \gamma_{\stackrel{}{B_1}}(G)=\gamma (G).$$
Upper bounds in terms of the order of the graph are known for the extreme parameters in this chain. On the one hand, a classical result by Ore states that $\gamma(G)\leq \lfloor \frac{n}{2}\rfloor$ (see \cite{Ore62}) and, on the other hand, Herke and Mynhardt (see \cite{Herke07,HM09}) showed that $\gamma_{\stackrel{}{B_{}}}(G)\le \lceil \frac n3 \rceil$.

The case $k=2$ appears in \cite{CHMPP17}, where it is shown that $\gamma_{\stackrel{}{B_{2}}}(G)\le \lceil \frac {4n}{9} \rceil$. Our target is to obtain a unified upper bound for $\gamma_{\stackrel{}{B_{k}}}(G)$, in terms of both the order of the graph and $k$, for every graph $G$ whenever $k\geq 3$. Clearly, Theorem~\ref{thm:spanning} allows us to work just with trees and we focus on these graph class in the rest of the section.

We consider two main cases, depending on the relationship between $k$ and the radius of the tree. Firstly, for a tree $T$ with $n$ vertices and an integer $k$ such that
$k\ge r=rad (T)$, we know that
$$\gamma_{\stackrel{}{B_{k}}}(T)=\gamma_{\stackrel{}{B_{r}}}(T)=\gamma_{\stackrel{}{B_{}}}(T).$$
Therefore in this case the desired upper bound in known: $\gamma_{\stackrel{}{B_{k}}}(T)\le \lceil \frac n3 \rceil$.

The rest of the section is devoted to give an upper bound for $\gamma_{\stackrel{}{B_{k}}}$ for trees with radius greater than $k$. To this end, we begin with a basic lemma about the behavior of the ceiling function, that we will repeatedly use in the proof of our bound. Although this lemma appears in \cite{CHMPP17}, we include the proof for the sake of completeness.

\begin{lemma}\cite{CHMPP17}\label{lem.cotafracciogeneral}
If $a,b,c,d$ are integers such that $a/b\le c/d$, then
$a+ \lceil c(n-b)/d \rceil \le \lceil cn/d \rceil$.
\end{lemma}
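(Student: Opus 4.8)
The plan is to reduce the statement to two elementary facts about the floor and ceiling functions, plus the inequality $ad\le bc$ obtained from the hypothesis. Since the lemma is only ever applied with $b$ and $d$ positive (they are denominators of ratios such as $\tfrac13$ or $\tfrac49$), I would work in that setting throughout; then $a/b\le c/d$ is equivalent to $ad\le bc$, i.e. $a\le bc/d$, and because $a$ is an integer this already gives $a\le\lfloor bc/d\rfloor$.

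The one genuine inequality input I would isolate first is the following: for all real numbers $\alpha,\beta$ one has $\lceil\alpha+\beta\rceil\ge\lceil\alpha\rceil+\lfloor\beta\rfloor$. This holds because $\beta\ge\lfloor\beta\rfloor$ and the ceiling function is nondecreasing, so $\lceil\alpha+\beta\rceil\ge\lceil\alpha+\lfloor\beta\rfloor\rceil=\lceil\alpha\rceil+\lfloor\beta\rfloor$, where the last equality uses that adding an integer commutes with taking the ceiling.

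With these pieces in hand the proof is a one-line computation: write $\frac{cn}{d}=\frac{c(n-b)}{d}+\frac{cb}{d}$ and apply the displayed inequality with $\alpha=\frac{c(n-b)}{d}$ and $\beta=\frac{cb}{d}$, then use $\lfloor cb/d\rfloor\ge a$:
$$\left\lceil\frac{cn}{d}\right\rceil=\left\lceil\frac{c(n-b)}{d}+\frac{cb}{d}\right\rceil\ge\left\lceil\frac{c(n-b)}{d}\right\rceil+\left\lfloor\frac{cb}{d}\right\rfloor\ge a+\left\lceil\frac{c(n-b)}{d}\right\rceil,$$
which is exactly the asserted bound $a+\lceil c(n-b)/d\rceil\le\lceil cn/d\rceil$.

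I do not expect a real obstacle here: the whole argument is two short observations followed by a three-term chain of inequalities. The only point that deserves a word of care is the implicit positivity of $b$ and $d$, which is what legitimizes clearing denominators in $a/b\le c/d$; this is satisfied in every invocation of the lemma in the sequel, so I would simply state it as a standing assumption at the start of the proof.
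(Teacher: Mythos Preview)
Your proof is correct and follows essentially the same approach as the paper's: both reduce to the inequality $a\le\lfloor bc/d\rfloor$ (using that $a$ is an integer with $a\le bc/d$) combined with a floor/ceiling inequality. The paper phrases the latter as $\lfloor x-y\rfloor\le\lceil x\rceil-\lceil y\rceil$, applied with $x=cn/d$ and $y=c(n-b)/d$, which is just a relabelling of your $\lceil\alpha+\beta\rceil\ge\lceil\alpha\rceil+\lfloor\beta\rfloor$.
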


\begin{proof}
Any pair of real numbers $x$ and $y$ satisfy $\lfloor x-y \rfloor \le \lceil x \rceil - \lceil y \rceil$.
Therefore, $\lfloor bc/d \rfloor =\lfloor cn/d - c(n-b)/d \rfloor \le  \lceil cn/d \rceil - \lceil c(n-b)/d \rceil$, so it is enough to prove that
$a\le \lfloor bc/d \rfloor $. We know that $a$ is an integer such that $a\le  bc/d < \lfloor bc/d \rfloor +1$. Hence, $a\le \lfloor bc/d \rfloor$.
\end{proof}

\begin{remark}\label{rem:path}
It is known that a path of order $n$ has broadcast number $\gamma_{\stackrel{}{B}}(P_n)=\big \lceil n/3 \big \rceil$ (see \cite{Herke07,HM09}). On the other hand, it is well known that $\gamma(P_n)=\lceil n/3 \big \rceil$ for every path $P_n$. Therefore, the general inequality $\gamma_{\stackrel{}{B}}(G)\leq \gamma_{\stackrel{}{B_{k}}}(G)\leq \gamma (G)$, for $k\geq 2$, gives
$\gamma_{\stackrel{}{B_{k}}}(P_n)=\lceil n/3 \big \rceil$.

\end{remark}

We now present the unified upper bound for the dominating $k$-broadcast number in trees, where $k\geq 3$ and the radius of the tree is greater than $k$. As pointed out before, a similar result for the case $k=2$ can be found in \cite{CHMPP17} where the upper bound
$\gamma_{\stackrel{}{B_{k}}}(T)\leq \lceil \frac {4n}{9}\rceil$ is provided by means of an inductive reasoning. The bound that we present here follows a similar formula, but neither the present proof is valid in the case $k=2$ nor the inductive proof in \cite{CHMPP17} can be followed to provide a general proof for every $k\geq 3$.

\begin{theorem}\label{theorem.cotaBktrees}
{Let $T$ be a tree of order $n$ and let $k\ge 3$ be an integer such that $k < rad(T)$. Then,
\begin{equation}\label{eqn.cota}
\gamma_{\stackrel{}{B_{k}}}(T)\leq
\bigg \lceil \frac {k+2}{k+1} \,\,\frac n3 \bigg \rceil
\end{equation}
}
\end{theorem}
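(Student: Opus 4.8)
The plan is to reduce to a convenient normal form of the tree and then peel off a carefully chosen ``block'' near a deepest leaf, covering it with a broadcast of cost at most $k+2$ while removing at least $3(k+1)$ vertices, so that an inductive argument driven by Lemma~\ref{lem.cotafracciogeneral} (applied with $a=k+2$, $b=3(k+1)$, $c=k+2$, $d=3(k+1)$, which trivially satisfies $a/b\le c/d$) closes the bound. First I would use Theorem~\ref{thm:spanning} implicitly (the statement is already about trees) and Proposition~\ref{prop:twins} to assume $T$ is twin-free, and Proposition~\ref{prop:general}(2) to assume any optimal broadcast we build assigns $0$ to leaves. Root $T$ at a central vertex so that depths coincide with distances from the center; since $rad(T)>k$, there is a vertex at depth $k+1$ or more, so there is ``room'' to work far from the center.

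Next I would identify the inductive step. Pick a leaf $w$ at maximum depth and let $P=w_0w_1\cdots$ be the path from $w=w_0$ toward the root. Consider the vertex $v=w_{k}$ (or $w_{k+1}$), assign it broadcast value $k$, and note that the ball $B_T(v,k)$ it covers contains all of the subtree of $T$ hanging below $v$ together with a stretch of $P$ of length $k$ going up; this costs $k$. The subtlety is that the subtree below $v$ may be small (e.g.\ just the path back down to $w$, giving only $k+1$ vertices removed for cost $k$, which is too expensive: $k/(k+1)$ is worse than $(k+2)/(3(k+1))$). So the block must be chosen larger. The right move is to go up further: choose the highest vertex $v$ on $P$ such that the subtree $T_v$ rooted at $v$ has eccentricity (within $T_v$, from $v$) at most $k$; assign $f(v)=\operatorname{ecc}_{T_v}(v)\le k$. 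If $|V(T_v)|\ge 3(\operatorname{ecc}_{T_v}(v))$, or more precisely if the number of removed vertices is at least $\tfrac{3(k+1)}{k+2}$ times the cost, we are fine and recurse on $T-V(T_v)$ (which is connected since $v$'s parent stays). Otherwise $T_v$ is ``thin'' — essentially a union of a few short paths through $v$ — and here one argues directly, combining $T_v$ with the next portion of $P$ above it: because $v$ was chosen maximal, the parent $v'$ of $v$ has a subtree of eccentricity $>k$, which forces another long branch, and a broadcast of value $k$ placed an appropriate distance up captures both branches, again deleting $\ge 3(k+1)$ vertices at cost $\le k+2$.

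Then I would set up the induction formally: prove by strong induction on $n$ that $\gamma_{B_k}(T)\le \big\lceil \tfrac{k+2}{k+1}\,\tfrac n3\big\rceil$ for all trees $T$ (handling $rad(T)\le k$ separately via $\gamma_{B_k}(T)=\gamma_B(T)\le\lceil n/3\rceil\le\lceil\tfrac{k+2}{k+1}\tfrac n3\rceil$, and small base cases where $rad(T)\le k$ directly). In the inductive step, having removed a block $W$ with $|W|\ge 3(k+1)$ and exhibited a partial broadcast of cost $\le k+2$ dominating exactly $W$, the remaining forest is a tree $T'=T-W$ on $n-|W|$ vertices (connectivity of $T-W$ must be verified — this is why the block is always taken to be a whole rooted subtree, or a whole subtree plus an initial segment of $P$, so that what remains hangs together through $v$'s ancestors). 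By induction $\gamma_{B_k}(T')\le\big\lceil\tfrac{k+2}{k+1}\tfrac{n-|W|}{3}\big\rceil$; adding the two broadcasts and invoking Lemma~\ref{lem.cotafracciogeneral} with $a=\omega(\text{block})$, $b=|W|$, $c=k+2$, $d=3(k+1)$ — valid because $\tfrac{a}{b}\le\tfrac{c}{d}$ by the block's efficiency — yields $a+\big\lceil\tfrac{(k+2)(n-|W|)}{3(k+1)}\big\rceil\le\big\lceil\tfrac{(k+2)n}{3(k+1)}\big\rceil$, which is exactly \eqref{eqn.cota}. The main obstacle, and where most of the real work lives, is the case analysis in the ``thin block'' situation: showing that whenever a maximal subtree of eccentricity $\le k$ is too sparse to pay for itself, the maximality constraint combined with $rad(T)>k$ guarantees enough additional vertices just above it to be absorbed at no extra asymptotic cost — this is the step that genuinely uses $k\ge 3$ (for $k=2$ the slack $(k+2)/(k+1)$ is too tight and a different, more delicate argument is needed, as noted in the discussion preceding the theorem).
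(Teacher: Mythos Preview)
Your overall strategy---induction via removing a block $W$ with $\omega(\text{cover of }W)/|W|\le (k+2)/(3k+3)$ and invoking Lemma~\ref{lem.cotafracciogeneral}---is exactly the engine the paper uses (phrased there as a minimum counterexample $T_0$, which amounts to the same thing). The divergence, and the gap, is in how the ``thin block'' is dispatched.

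Your claimed mechanism---``because $v$ was chosen maximal, the parent $v'$ has a subtree of eccentricity $>k$, which forces another long branch''---is false. If $T$ is a path (or a sparse caterpillar), $T_{v'}$ has eccentricity $k+1$ simply because $w$ is now one step farther from $v'$ than from $v$; no second branch is forced. More generally, $\operatorname{ecc}_{T_{v'}}(v')=1+\max_c \operatorname{ecc}_{T_c}(c)$ over children $c$ of $v'$, and this can exceed $k$ through the single child $v$ alone. Even retreating to $\gamma_{B_k}(T_v)=\gamma_B(T_v)\le\lceil |T_v|/3\rceil$ does not rescue the ratio: for $|T_v|$ not a multiple of $3$ and $|T_v|\lesssim 2k$, one has $\lceil |T_v|/3\rceil/|T_v|>(k+2)/(3k+3)$, so the induction does not close.

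What the paper actually does in place of your ``another long branch'' step is a density argument along the diametral path. It shows (Claim~\ref{clm:consecutivos}) that in the minimum counterexample, no two adjacent vertices near the end of a diametral path can both have degree $2$ (or both have degree $3$ with pendant leaves); the proof is a sliding trick---among three nearby cuts the resulting subtree orders are $m,m+1,m+2$ (or $m,m+2,m+4$), so one of them is divisible by $3$, and for that one $\lceil n'/3\rceil=n'/3$ is already efficient enough. This forces every other vertex along the path to carry an extra vertex, giving the lower bound $|V(T_0(u,u_iu_{i+1}))|\ge B(i)\approx 3i/2$ of Claim~\ref{clm:order}. With that density in hand, a broadcast of value roughly $i/2$ at the centre of the initial segment dominates at cost ratio $\approx 1/3\le (k+2)/(3k+3)$; the remaining work is a case analysis on where in $\{1,\dots,k{+}1\}$ the eccentricity of the cut subtree is realized. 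Your sketch has nothing corresponding to this ``make the order a multiple of $3$'' idea, and without it the thin case does not go through.
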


\proof

Suppose to the contrary that the statement is not true and let $T_0$ be a tree of minimum order $n$ and radius at least $k+1$ not satisfying Inequality~\ref{eqn.cota}. Clearly, $n\ge  2k+2$ and $T_0$ is not a path since every path satisfies this inequality, by Remark~\ref{rem:path}. Observe that every proper subtree $T'$ of $T_0$ satisfies Inequality~\ref{eqn.cota}.
Indeed, $T'$ has either  radius at most $k$ or radius greater than $k$ and order $n'$ less than $n$. In the first case,
$T'$ satisfies Inequality~\ref{eqn.cota} because  $rad(T')\leq k$ and
$\lceil \frac {n'}{3} \rceil \le \big\lceil \frac{k+2}{k+1} \,\, \frac {n'}{3} \big\rceil$,
and in the second one,
$T'$ satisfies Inequality~\ref{eqn.cota} because of the choice of $T_0$.

We next show some properties of the tree $T_0$ needed to prove the theorem.

\begin{clm}\label{clm:tecnico}
If $e$ is an edge of  $T_0$, then any dominating $k$-broadcast $f'$
on a connected component $T'$ of $T_0-e$ with order $n'$ satisfies
$$\frac {k+2}{3k+3}\, n'< \omega(f') \leq \bigg\lceil \frac {k+2}{3k+3}\, n' \bigg\rceil.$$
\end{clm}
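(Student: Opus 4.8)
The plan is to prove the two inequalities separately; the upper bound is essentially free and the lower bound carries all the content. Recall that, by the choice of $T_0$, we have $\gamma_{\stackrel{}{B_k}}(T_0)>\big\lceil\frac{k+2}{k+1}\cdot\frac n3\big\rceil=\big\lceil\frac{(k+2)n}{3k+3}\big\rceil$, and that every proper subtree of $T_0$ satisfies Inequality~\ref{eqn.cota}. Since the component $T'$ of $T_0-e$ has order $n'<n$, it is a proper subtree, so $\gamma_{\stackrel{}{B_k}}(T')\le\big\lceil\frac{(k+2)n'}{3k+3}\big\rceil$; taking $f'$ to be an optimal dominating $k$-broadcast of $T'$ gives $\omega(f')=\gamma_{\stackrel{}{B_k}}(T')\le\big\lceil\frac{(k+2)n'}{3k+3}\big\rceil$, which is the right-hand inequality. (The claim is applied only to such minimum-cost $f'$; for an arbitrary dominating $k$-broadcast the upper bound of course need not hold.)

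For the lower bound I would argue by contradiction: suppose some dominating $k$-broadcast $f'$ of $T'$ satisfies $\omega(f')\le\frac{k+2}{3k+3}n'$, equivalently $\gamma_{\stackrel{}{B_k}}(T')\le\frac{k+2}{3k+3}n'$. Let $T''$ be the other component of $T_0-e$, of order $n''=n-n'$. As $T'$ is nonempty, $n''<n$, so $T''$ is also a proper subtree of $T_0$ and hence $\gamma_{\stackrel{}{B_k}}(T'')\le\big\lceil\frac{(k+2)n''}{3k+3}\big\rceil$. Since $e$ is a cut-edge of $T_0$ whose removal yields $T'$ and $T''$, Proposition~\ref{prop:general}(1) gives
$$\gamma_{\stackrel{}{B_k}}(T_0)\le\gamma_{\stackrel{}{B_k}}(T')+\gamma_{\stackrel{}{B_k}}(T'')\le\omega(f')+\Big\lceil\frac{(k+2)n''}{3k+3}\Big\rceil.$$

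Now I would apply Lemma~\ref{lem.cotafracciogeneral} with $a=\omega(f')$, $b=n'$, $c=k+2$ and $d=3k+3$: these are integers with $b,d>0$ and $a\ge 0$, and the hypothesis $a/b\le c/d$ is exactly the assumed inequality $\omega(f')/n'\le\frac{k+2}{3k+3}$. The lemma then gives
$$\omega(f')+\Big\lceil\frac{(k+2)(n-n')}{3k+3}\Big\rceil\le\Big\lceil\frac{(k+2)n}{3k+3}\Big\rceil,$$
so that $\gamma_{\stackrel{}{B_k}}(T_0)\le\big\lceil\frac{k+2}{k+1}\cdot\frac n3\big\rceil$, contradicting the choice of $T_0$. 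Hence $\omega(f')>\frac{k+2}{3k+3}n'$ for every dominating $k$-broadcast $f'$ of $T'$, as claimed. I do not expect any genuine obstacle here: the only things to watch are matching the constant $\frac{k+2}{k+1}\cdot\frac n3$ with $\frac{(k+2)n}{3k+3}$ so that Lemma~\ref{lem.cotafracciogeneral} applies verbatim, and observing that both $T'$ and $T''$ are proper subtrees of $T_0$ (so each satisfies Inequality~\ref{eqn.cota}); the argument is then a one-line application of cut-edge subadditivity of $\gamma_{\stackrel{}{B_k}}$ together with the ceiling-function lemma.
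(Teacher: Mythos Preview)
Your argument is correct and matches the paper's proof essentially line for line: both use Proposition~\ref{prop:general}(1) to split $T_0$ along $e$, invoke Inequality~\ref{eqn.cota} for the other component $T''$, and then apply Lemma~\ref{lem.cotafracciogeneral} with $a=\omega(f')$, $b=n'$, $c=k+2$, $d=3k+3$ to reach the contradiction $\gamma_{\stackrel{}{B_k}}(T_0)\le\big\lceil\frac{(k+2)n}{3k+3}\big\rceil$. Your parenthetical remark that the upper bound only makes sense for optimal $f'$ is in fact more careful than the paper's ``clearly'' (which, as written, is false for arbitrary $f'$); since only the lower bound is ever used in the sequel, this causes no trouble.
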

{\itshape Proof of Claim 1.}
Clearly, $\omega(f') \leq \big\lceil \frac {k+2}{3k+3}\, n' \big\rceil$  for any dominating $k$-broadcast $f'$
on $T'$. Suppose that there exists a dominating $k$-broadcast function $f'$ on $T'$ such that $\omega(f')\le \frac {k+2}{3k+3}n'$ or equivalently $\frac{\omega(f')}{n'}\le \frac {k+2}{3k+3}$. Let $T''$ be the other connected component of $T-e$, that satisfies Inequality~\ref{eqn.cota} because it has order less than $n$. Then, by Lemma~\ref{lem.cotafracciogeneral} and Proposition~\ref{prop:general} we have:
$$\gamma_{\stackrel{}{B_k}} (T_0)\le \gamma_{\stackrel{}{B_k}} (T')+\gamma_{\stackrel{}{B_k}} (T'')
\le \omega (f') + \bigg\lceil \frac{k+2}{3k+3}  (n-n') \bigg\rceil
\le \bigg\lceil \frac{k+2}{3k+3} \,\, {n} \bigg\rceil$$
which is a contradiction.
\par\medskip

Note that the first inequality can be written as $\displaystyle\frac{\omega(f')}{n'}>\frac {k+2}{3k+3}$.

\begin{clm}\label{clm:notwins}
$T_0$ has no twins.
\end{clm}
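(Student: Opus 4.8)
The plan is to contradict the minimality of $T_0$ by passing to the twin-free tree associated to it. Assume, for contradiction, that $T_0$ has twins, i.e. some maximal set of pairwise twin leaves has size at least $2$. Let $T_0^*$ be the twin-free tree associated to $T_0$, obtained by deleting all but one leaf from every maximal set of pairwise twin leaves, and put $n^*=|V(T_0^*)|$. Since $T_0$ has twins, at least one leaf is removed, so $n^*<n$ and $T_0^*$ is a proper subtree of $T_0$.

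The next step is to record the two facts that make this reduction legitimate. First, removing twin leaves does not change the radius (this is noted just before Proposition~\ref{prop:twins}), so $rad(T_0^*)=rad(T_0)\ge k+1>k$; in particular $T_0^*$ has order at least $k+2\ge 3$, so both the twin-free construction and Proposition~\ref{prop:twins} genuinely apply to it. Second, since $T_0^*$ is a proper subtree of $T_0$ with radius greater than $k$ and order $n^*<n$, it lies in the ``radius greater than $k$, smaller order'' case of the dichotomy already established for proper subtrees of $T_0$, and hence $T_0^*$ satisfies Inequality~\ref{eqn.cota}.

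Finally I would combine these two facts with Proposition~\ref{prop:twins} and with the obvious monotonicity of the map $m\mapsto\big\lceil\frac{k+2}{k+1}\cdot\frac m3\big\rceil$:
$$\gamma_{\stackrel{}{B_k}}(T_0)=\gamma_{\stackrel{}{B_k}}(T_0^*)\le\left\lceil\frac{k+2}{k+1}\cdot\frac{n^*}{3}\right\rceil\le\left\lceil\frac{k+2}{k+1}\cdot\frac{n}{3}\right\rceil,$$
which contradicts the choice of $T_0$ as a tree violating Inequality~\ref{eqn.cota}. Therefore $T_0$ has no twins. I do not expect any genuine obstacle here: the whole argument is a one-line consequence of Proposition~\ref{prop:twins} together with the minimality of $T_0$, and the only point requiring verification is the invariance of the radius under deletion of twin leaves, which is already available.
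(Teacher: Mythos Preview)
Your proof is correct and follows essentially the same approach as the paper: both pass to the twin-free tree $T_0^*$, invoke Proposition~\ref{prop:twins} to get $\gamma_{\stackrel{}{B_k}}(T_0)=\gamma_{\stackrel{}{B_k}}(T_0^*)$, use that $T_0^*$ is a proper subtree of $T_0$ (hence satisfies Inequality~\ref{eqn.cota}), and conclude via monotonicity of the ceiling expression. Your version is slightly more explicit in checking that $rad(T_0^*)=rad(T_0)>k$, but this is not strictly needed since the paper already established that \emph{every} proper subtree of $T_0$ satisfies Inequality~\ref{eqn.cota}, regardless of its radius.
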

{\itshape Proof of Claim 2.} Suppose to the contrary that $T_0$ has twins and let $T_0^*$ be its associated twin-free tree of order $n^*<n$. Hence, $T_0^*$ satisfies Inequality~\ref{eqn.cota} and using Proposition~\ref{prop:twins} we obtain
$$\gamma_{\stackrel{}{B_k}} (T_0)=\gamma_{\stackrel{}{B_k}} (T_0^*)\le \bigg\lceil \frac{k+2}{k+1} \frac{n^*}3 \bigg\rceil \le \bigg\lceil \frac{k+2}{k+1}  \frac{n}3 \bigg\rceil.$$
\par\medskip

Consider any vertex $u\in V(T_0)$ and any edge $e\in E(T_0)$. We denote by $T_0(u,e)$ the subtree containing $u$, obtained from $T_0$ by deleting the edge $e$. Now, consider a pair $u$ and $u'$ of antipodal vertices of $T_0$,
and then let $u,u_1,\dots ,u_k,u_{k+1},\dots,u_{D-1},u'$ be a $(u,u')$-path of length $D=diam(T_0)$.
Observe that $k<rad(T_0)=r<D=d(u,u')$ {and} $D\in \{  2r-1, 2r \}$.
For $i\ge 1$, let $T_0(u_i)$ be the connected component of
$T_0-\{  u_{i-1}u_{i} , u_iu_{i+1} \}$ containing $u_i$
and let $u_i'$ be an eccentric vertex of $u_i$ in $T_0(u_i)$. Let $d_i=d(u_i,u_i')= ecc_{T_0(u_i)} (u_i)$ (see Figure~\ref{fig:Thmnotation}).
If $d_i=0$, then $u_i'=u_i$.
Note that $d_1=0$, since $T_0$ has no twins, and $0\le d_i\le i$ whenever $1\le i\le k+1$, as $u$ and $u'$ are antipodal.

\begin{figure}[h]
\begin{center}
\includegraphics [width=0.6\textwidth]{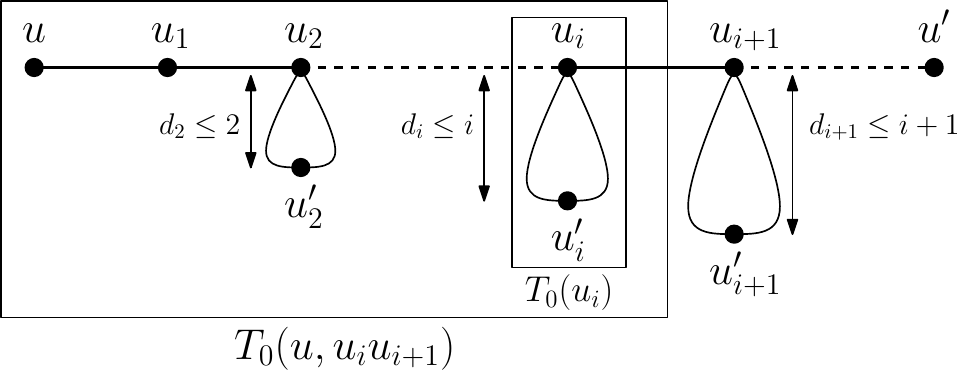}
\caption{Vertex $u_i'$ is such that $d(u_i,u_i')=ecc_{T_0(u_i)} (u_i)=d_i$.
}\label{fig:Thmnotation}
\end{center}
\end{figure}

\begin{clm}\label{clm:consecutivos}
Let $x$ and $y$ be a pair of adjacent vertices of $T_0(u,u_{k+1}u_{k+2})$. Then, none of the following properties holds:
\begin{enumerate}[i)]
\item $\deg_{T_0}(x)=\deg _{T_0}(y)=2$;
\item $\deg_{T_0}(x)=\deg _{T_0}(y)=3$ and both have a leaf as a neighbor.
\end{enumerate}
\end{clm}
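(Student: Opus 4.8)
The overall strategy is the standard exchange/contraction argument used throughout the paper: assuming one of the forbidden local configurations occurs, I will exhibit a proper subtree $T'$ of $T_0$ together with a dominating $k$-broadcast whose cost, when combined via Proposition~\ref{prop:general}(1) and Lemma~\ref{lem.cotafracciogeneral} with an optimal broadcast on the complementary piece, contradicts Claim~\ref{clm:tecnico} (equivalently, it contradicts the minimality of $T_0$ and Inequality~\ref{eqn.cota}). Concretely, for each case I will cut an edge $e$ incident to the configuration so that one component $T'$ is a short piece (a path on $2$ vertices in case i), or a small subtree in case ii), with a known small dominating $k$-broadcast number, and then apply the ratio inequality: it suffices to show that the chosen small piece $T'$ of order $n'$ satisfies $\gamma_{\stackrel{}{B_k}}(T')\le \frac{k+2}{3k+3}\,n'$, since then Claim~\ref{clm:tecnico} is violated.

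\textbf{Case i).} Suppose $x,y$ are adjacent with $\deg_{T_0}(x)=\deg_{T_0}(y)=2$, both lying in $T_0(u,u_{k+1}u_{k+2})$. Let $x'$ (resp.\ $y'$) be the neighbor of $x$ (resp.\ $y$) other than $y$ (resp.\ $x$). I would cut the edge $xx'$, obtaining a component $T'$ containing $x$; but this need not be small, so instead the cleaner move is to observe that the path $\dots x'\,x\,y\,y'\dots$ can be used to build a broadcast that ``covers'' $x$ and $y$ cheaply from a vertex at distance $\le k$ along the $(u,u')$-path. The key arithmetic fact I will use is that in a path, three consecutive vertices can be dominated at unit cost (a single vertex with value $1$), i.e.\ the ratio $\gamma_{\stackrel{}{B_k}}/n$ for $P_3$ is $1/3<\frac{k+2}{3k+3}$ when $k\ge 3$; so a subtree with ``enough path-like structure'' of the right size beats the threshold. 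I expect the actual choice of $e$ will be $e=u_{k+1}u_{k+2}$ or an edge just past $y$, giving a component $T'$ that is a caterpillar-like tree of radius $\le k$, on which the ratio $1/3$ bound applies — and $1/3 \le \frac{k+2}{3k+3}$ reduces to $3k+3\le 3k+6$, true. The subtle point is ensuring $T'$ really has radius at most $k$ (so the $\lceil n'/3\rceil$ bound is available) while being a proper subtree; this is where the hypothesis $x,y\in T_0(u,u_{k+1}u_{k+2})$ and the diameter estimate $D\in\{2r-1,2r\}$ enter.

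\textbf{Case ii).} Suppose $\deg_{T_0}(x)=\deg_{T_0}(y)=3$, $x$ and $y$ adjacent, and each has a leaf neighbor, say $\ell_x \sim x$ and $\ell_y\sim y$. By Claim~\ref{clm:notwins} these leaves are not twins, so $x$ has exactly one leaf neighbor and one further non-leaf neighbor $x'$, and similarly for $y$ with $y'$. Cutting the edge $xx'$ yields a component $T'$ consisting of $x,\ell_x,y,\ell_y$ plus the subtree hanging off $y'$ on the $y$-side; the relevant subpiece I will isolate is the ``double-broom'' on $x,\ell_x,y,\ell_y$ (order $4$), which has a dominating $k$-broadcast of cost $2$ (value $1$ at $x$ and $1$ at $y$, or value $2$ at one of them since $k\ge 2$ gives cost $2$ reaching the whole broom of radius $2$) — actually radius $2$ here, so a single vertex with value $2$ dominates all four, cost $2$, ratio $2/4=1/2$. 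That is \emph{not} below $\frac{k+2}{3k+3}$, so I will instead combine this broom with three more vertices along $x'$ to get order $7$ and cost $3$ (value $1$ on each of two well-chosen vertices plus handling the broom), chasing the ratio down to $3/7$; and $\frac{3}{7}\le \frac{k+2}{3k+3}$ holds iff $9k+9\le 7k+14$, i.e.\ $k\le 2.5$, which \emph{fails} for $k\ge 3$ — so this naive grouping is wrong and I must be smarter: the right grouping pairs each degree-$3$-with-leaf vertex with its leaf and one path-step, giving blocks of size $3$ dominated at cost $1$, recovering the $1/3$ ratio. So the real work is to decompose the forbidden configuration into unit-cost triples.

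\textbf{Main obstacle.} The crux — and the step I expect to be delicate — is the careful edge selection: I must cut $T_0$ so that (a) the detached component $T'$ is a \emph{proper} subtree, (b) it has radius $\le k$ (to invoke the favorable $\lceil n'/3\rceil$ bound via Remark on paths / the radius case of Inequality~\ref{eqn.cota}), and (c) its order $n'$ is large enough that $\lceil n'/3\rceil \le \frac{k+2}{3k+3}n'$ becomes a \emph{strict-enough} inequality to contradict the strict lower bound $\omega(f')>\frac{k+2}{3k+3}n'$ of Claim~\ref{clm:tecnico}. Getting all three simultaneously forces $n'$ to be a multiple of $3$ (or handled by the ceiling slack) and of size at least, say, $3$; verifying this is where the hypothesis ``$x,y$ are in the initial segment $T_0(u,u_{k+1}u_{k+2})$'' is essential, since it guarantees there is a long enough path on the $u$-side to absorb the cut without blowing up the radius. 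Once the right $T'$ and broadcast are in hand, the contradiction with Claim~\ref{clm:tecnico} is immediate, so I will organize the proof as: fix the configuration, name the vertices and their neighbors, invoke Claim~\ref{clm:notwins} to pin down degrees, choose $e$, exhibit the cheap broadcast on $T'$, and close by Claim~\ref{clm:tecnico}.
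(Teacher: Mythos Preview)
Your general framework is right --- cut an edge, show the detached piece $T'$ has $\gamma_{\stackrel{}{B_k}}(T')/n' \le \frac{k+2}{3k+3}$, and contradict Claim~\ref{clm:tecnico} --- but the concrete implementation has two genuine gaps.

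First, you are looking for the wrong kind of piece. You repeatedly try to isolate a \emph{small fixed-size} local patch (a $P_2$, a $4$-vertex broom, a $7$-vertex gadget) and then struggle because the ratios $1/2$, $2/4$, $3/7$ are all above $\frac{k+2}{3k+3}$ for $k\ge 3$. The paper instead cuts off the \emph{entire end} of $T_0$ on the $u$-side (or the $u_j'$-side if $x,y$ lie in a hanging subtree $T_0(u_j)$): one considers the three trees $T_0(w,x'x)$, $T_0(w,xy)$, $T_0(w,yy')$ where $w\in\{u,u_j'\}$. Each of these sits inside $T_0(u,u_{k+1}u_{k+2})$ and has radius at most $k$, so the bound $\gamma_{\stackrel{}{B_k}}(T')=\gamma_{\stackrel{}{B}}(T')\le\lceil n'/3\rceil$ applies directly --- there is no need to hand-build a broadcast.

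Second, and this is the key trick you are circling but never land on: the three candidate pieces have orders that differ by $1,1$ in case~i) (consecutive integers $m,m+1,m+2$) and by $2,2$ in case~ii) (integers $m,m+2,m+4$). In both lists one entry is a multiple of $3$. Choosing that piece gives $n'=3t$ exactly, so $\lceil n'/3\rceil = t = n'/3$, and the ratio is precisely $1/3 \le \frac{k+2}{3k+3}$. Then Proposition~\ref{prop:general}(1) and Lemma~\ref{lem.cotafracciogeneral} give $\gamma_{\stackrel{}{B_k}}(T_0)\le t + \lceil\frac{k+2}{3k+3}(n-3t)\rceil \le \lceil\frac{k+2}{3k+3}n\rceil$, the desired contradiction. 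You correctly guessed in your final paragraph that a multiple of $3$ is needed, but the mechanism for \emph{obtaining} one --- trying three nested cuts around the forbidden pair --- is the missing idea.
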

{\itshape Proof of Claim 3.}
First, suppose that $\deg_{T_0}(x)=\deg _{T_0}(y)=2$ and let $x'x$, $xy$ and $yy'$ be the edges of $T_0$ incident to $x$ and $y$. If $x$ and $y$ belong to the $(u,u_{k+1})$-path, then we consider the trees $T_0(u,x'x)$, $T_0(u,xy)$ and $T_0(u,yy')$. Otherwise, $x$ and $y$ belong to some tree $T(u_j)$ with $2\le j\le k+1$ and in this case we consider the trees $T_0(u_j',x'x)$, $T_0(u_j',xy)$ and $T_0(u_j',yy')$.
In both cases, at least one of these trees, say $T'$, has order multiple of 3.
In a similar way, if $\deg_{T_0}(x)=\deg _{T_0}(y)=3$ and the edges incident to these vertices are $x'x$, $xy$, $yy'$, $x''x$, $yy''$, where $x''$ and $y''$ are leaves, then if $x$ and $y$ belong to the $u-u_{k+1}$ path, consider
the trees $T_0(u,x'x)$, $T_0(u,xy)$ $T_0(u,yy')$. Otherwise, $x$ and $y$ belong to some tree $T(u_j)$ with $2\le j\le k+1$, and in this case consider the trees $T_0(u_j',x'x)$, $T_0(u_j',xy)$ and $T_0(u_j',yy')$. In both cases, these trees have order $m$, $m+2$, $m+4$ respectively, for some integer $m$. Thus, at least one of them, say $T'$, has order a multiple of 3.

So, we have a tree $T'=T_0(w,e)$ of radius at most $k$ and order $n'=3t$, $t\in \mathbb{Z}$, for some vertex $w$ and some edge $e$. If $T''$ is the other connected component of $T_0-e$, then $T''$ has order less than $n$. Therefore, by Proposition~\ref{prop:general} and Lemma~\ref{lem.cotafracciogeneral},
\begin{align*}
   \gamma_{\stackrel{}{B_k}} (T_0)&\le \gamma_{\stackrel{}{B_k}} (T')+\gamma_{\stackrel{}{B_k}} (T'')=  \gamma_{\stackrel{}{B}} (T') + \gamma_{\stackrel{}{B_k}} (T'')\\
            &\le \big \lceil n' /3 \big\rceil + \bigg\lceil \frac{k+2}{3k+3} \, (n-n')\bigg\rceil = t + \bigg\lceil \frac{k+2}{3k+3} \, (n- 3t)\bigg\rceil
            \le \bigg\lceil \frac{k+2}{3k+3} \, n\bigg\rceil ,
\end{align*}
which is a contradiction.
\par\medskip

\begin{clm}\label{clm:almenos2}
There exists $i\in \{2, \dots ,k\}$ such that $d_i\ge 2$. Moreover, if $k$ is even, {then} $d_i\ge 2$ for some $i<k$.
\end{clm}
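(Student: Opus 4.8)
The plan is to argue by contradiction, so suppose $d_i\le 1$ for every $i\in\{2,\dots,k\}$; since $d_1=0$ is already known, in fact $d_i\le 1$ for all $i\in\{1,\dots,k\}$. For such an $i$ the hypothesis $d_i=ecc_{T_0(u_i)}(u_i)\le 1$ says that $T_0(u_i)$ is a star with centre $u_i$; as the only edges of $T_0$ leaving $T_0(u_i)$ are $u_{i-1}u_i$ and $u_iu_{i+1}$, every vertex of $T_0(u_i)$ other than $u_i$ is a leaf of $T_0$ with support $u_i$, and by Claim~\ref{clm:notwins} there is at most one of them. Hence $T_0(u_i)=\{u_i\}$ and $\deg_{T_0}(u_i)=2$ when $d_i=0$, whereas $T_0(u_i)=\{u_i,z_i\}$ for a leaf $z_i$ and $\deg_{T_0}(u_i)=3$ with a leaf neighbour when $d_i=1$.

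The next step is to feed consecutive spine vertices into Claim~\ref{clm:consecutivos}. For $1\le i\le k-1$ the adjacent vertices $u_i,u_{i+1}$ both lie in $T_0(u,u_{k+1}u_{k+2})$: if $d_i=d_{i+1}=0$ then $\deg_{T_0}(u_i)=\deg_{T_0}(u_{i+1})=2$, contradicting part (i), and if $d_i=d_{i+1}=1$ then both vertices have degree $3$ with a leaf neighbour, contradicting part (ii). So $d_i\ne d_{i+1}$ for $1\le i\le k-1$, which together with $d_1=0$ forces $d_i=0$ for odd $i$ and $d_i=1$ for even $i$, throughout $i\le k$. Consequently, for any odd $j\le k$ we have $d_j=0$, so $u_j$ keeps only its two spine neighbours, and the component $T':=T_0(u,u_ju_{j+1})$ of $T_0-u_ju_{j+1}$ containing $u$ is exactly the caterpillar with spine $u=u_0,u_1,\dots,u_j$ and one pendant leaf $z_i$ at each even position $i\in\{2,4,\dots,j-1\}$. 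Therefore $n':=|V(T')|=(j+1)+\tfrac{j-1}{2}=\tfrac{3j+1}{2}$ and $rad(T')\le \lceil j/2\rceil+1\le k$ (the last inequality using $k\ge 3$).

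Now I would fix $j$ as the largest odd integer $\le k$ for the first assertion, and $j=k-1$ (odd, since $k$ is even, and $\ge 3$) for the ``moreover'' assertion; writing $j=2m+1$ gives $n'=3m+2$. Because $rad(T')\le k$ we get $\gamma_{\stackrel{}{B_k}}(T')=\gamma_{\stackrel{}{B}}(T')\le\lceil n'/3\rceil=m+1$. The other component $T''$ of $T_0-u_ju_{j+1}$ is a proper subtree of $T_0$, hence satisfies Inequality~\ref{eqn.cota}, so $\gamma_{\stackrel{}{B_k}}(T'')\le\big\lceil\tfrac{k+2}{3k+3}(n-n')\big\rceil$; by Proposition~\ref{prop:general}(1),
$$\gamma_{\stackrel{}{B_k}}(T_0)\le(m+1)+\Big\lceil\tfrac{k+2}{3k+3}(n-n')\Big\rceil .$$
I would finish with Lemma~\ref{lem.cotafracciogeneral} applied to $(a,b,c,d)=(m+1,\,3m+2,\,k+2,\,3k+3)$: cross-multiplying shows its hypothesis $\tfrac{m+1}{3m+2}\le\tfrac{k+2}{3k+3}$ is equivalent to $3m\ge k-1$, i.e. to $j\ge\tfrac{2k+1}{3}$, which holds for the chosen $j$ — it reads $k\ge 1$ when $j=k$, and $k\ge 4$ when $j=k-1$, which is fine since $k$ is then even and at least $3$. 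The lemma yields $\gamma_{\stackrel{}{B_k}}(T_0)\le\big\lceil\tfrac{k+2}{3k+3}n\big\rceil=\big\lceil\tfrac{k+2}{k+1}\tfrac n3\big\rceil$, contradicting the choice of $T_0$.

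The only delicate points are bookkeeping: checking that $T'$ is precisely the caterpillar described — this is exactly why $j$ must be odd, so that $d_j=0$ makes $u_j$ a leaf of $T'$ after the cut, and one must also verify each $z_i$ lies in $T'$ — and checking the two numerical facts $rad(T')\le k$ and that the hypothesis of Lemma~\ref{lem.cotafracciogeneral} reduces to $j\ge(2k+1)/3$. The single structural input is Claim~\ref{clm:consecutivos}, which pins down the alternation of the $d_i$; the ``moreover'' clause then needs no new idea, since running the identical computation with $j=k-1$ uses only the bound $d_i\le 1$ for $i<k$.
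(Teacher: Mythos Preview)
Your proof is correct and follows essentially the same line as the paper's: assume $d_i\le 1$, use Claims~\ref{clm:notwins} and~\ref{clm:consecutivos} to force the alternation $d_1=0,d_2=1,d_3=0,\dots$, cut at $u_ju_{j+1}$ with $j=k$ (odd $k$) or $j=k-1$ (even $k$), and derive a contradiction from the resulting caterpillar $T'$. The only cosmetic differences are that the paper exhibits an explicit broadcast on $T'$ (value $(j+1)/2$ at the centre of the spine) while you invoke the general bound $\gamma_{\stackrel{}{B}}(T')\le\lceil n'/3\rceil$, and that the paper packages the final step through Claim~\ref{clm:tecnico} whereas you unfold it via Proposition~\ref{prop:general} and Lemma~\ref{lem.cotafracciogeneral}; the numerics coincide in both cases.
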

{\itshape Proof of Claim 4.}
Suppose to the contrary that $d_i\in\{0,1\}$ for every $i\in \{2, \dots ,k\}$.
Note that either $d_i=d_{i+1}=0$ or $d_i=d_{i+1}=1$ for some $i\in \{2,\dots ,k\}$ and Claim~\ref{clm:notwins} implies that the pair of adjacent vertices $u_i$ and $u_{i+1}$ satisfies one of the conditions of Claim~\ref{clm:consecutivos}, which is not possible, so $d_1=0, d_2=1, d_3=0, d_4=1,\ \dots$. Therefore, the vertices $u_2,u_3,\dots ,u_k$ have degree $3,2,3,2,\dots $ respectively, and there is just one leaf hanging from the vertices of degree $3$.

For $k$ odd, let $T'=T_0(u,u_ku_{k+1})$.
Then, $T'$ has order $n'=k+1+\lfloor k/2 \rfloor=(3k+1)/2$. If $w$ is a center of the $(u,u_k)$-path,
then define the dominating $k$-broadcast function $f'$ on $T'$ such that $f'(w)=(k+1)/2$ and $f'(x)=0$, otherwise.
It is straightforward to check that for $k\ge 1$ an odd integer,
$$\displaystyle \frac{\omega(f')}{n'}=\frac {k+1}{3k+1}\le \frac {k+2}{3k+3} .$$

For $k$ even, let $T'=T_0(u,u_{k-1}u_{k})$.
The tree $T'$ has order $n'=(k-1)+1+\lfloor (k-1)/2 \rfloor=(3k-2)/2$.
Let $w$ be a center of the $(u,u_{k-1})$-path.
Consider the dominating k-broadcast function $f'$ such that
$f'(w)=k/2$ and $f'(x)=0$, otherwise.
It is straightforward to check that for $k\ge 4$ an even integer,
$$\displaystyle \frac{\omega (f')}{n'}=\frac {k}{3k-2}\le \frac {k+2}{3k+3} .$$

In both cases, we get a contradiction by Claim~\ref{clm:tecnico}.
\par\medskip

Let us define the following function $A: \mathbb{N} \mapsto \mathbb{N}$,
$$A(i)=
  \begin{cases}
     \frac {3i+2}{2} & \text{if }i \text{ is even,}\\
    \frac {3i+1}{2} &  \text{if }i \text{ is odd.}
    \end{cases}
    $$

\begin{clm}\label{clm:order}  \begin{enumerate}
\item $|V( T_0(u,u_iu_{i+1}) )|\ge A(i)$, for every $i\in \{ 1,\dots ,k-1 \}$.
\item $|V( T_0(u_i))|\ge A(d_i-1)+1$, for every $i\in \{ 1,\dots ,k-1 \}$.
\item $|V(T_0(u,u_{k-1}u_{k}))|\ge A(k-1)+1=\frac{3k}{2}$, for $k$ even.
\item $|V(T_0(u,u_ku_{k+1}))|\ge A(k)+1$.
\end{enumerate}
\end{clm}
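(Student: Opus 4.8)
The plan is to prove Claim~\ref{clm:order} by establishing the four parts essentially in order, bootstrapping each on the previous ones, and using Claims~\ref{clm:notwins}, \ref{clm:consecutivos} and \ref{clm:almenos2} as the structural input. The underlying combinatorial fact I would extract first is a counting lemma for the $(u,u_1,\dots,u_{k-1})$-path together with the hanging subtrees $T_0(u_j)$: walking along this path, consecutive vertices cannot both have degree $2$, and cannot both have degree $3$ with a pendant leaf (Claim~\ref{clm:consecutivos}); moreover each $u_j$ carries, through the eccentric vertex $u_j'$, a subtree contributing roughly $d_j$ extra vertices. So along a segment of $i$ consecutive edges the number of vertices is minimized by a pattern that alternates "degree $3$ plus a leaf" with "degree $2$", i.e.\ alternately adding $2$ and $1$ to the running count; this alternating $+2,+1$ pattern is exactly what produces the function $B(i)$, since $B(i)$ counts $i+1$ path vertices plus $\lfloor i/2\rfloor$ (for $i$ odd) or $\lfloor i/2\rfloor$ adjusted (for $i$ even) pendant vertices. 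I would prove part (1), $|V(T_0(u,u_iu_{i+1}))|\ge B(i)$, by induction on $i$: the base cases $i=1,2$ are immediate ($d_1=0$ forces $u_1$ to have degree $2$ toward $u$, so at least two vertices, matching $B(1)=2$), and the inductive step adds one more path vertex $u_i$ plus whatever hangs at it, invoking Claim~\ref{clm:consecutivos} to forbid the configuration that would add too little.

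For part (2), $|V(T_0(u_i))|\ge B(d_i-1)+1$, the idea is that $T_0(u_i)$ contains $u_i$ itself (the "$+1$") together with a path from $u_i$ to its eccentric vertex $u_i'$ of length $d_i$, and along that internal path the same no-twins/Claim~\ref{clm:consecutivos}-type restrictions apply, so it contains at least $B(d_i-1)$ vertices besides $u_i$ — here one must be slightly careful that $u_i$ plays the role of an endpoint and that $T_0(u_i)$ is itself a subtree of radius at most $i\le k-1\le k$, so Claim~\ref{clm:consecutivos} is applicable to adjacent vertices inside it (they lie in $T_0(u,u_{k+1}u_{k+2})$). Parts (3) and (4) are the payoff: for $k$ even, part (3) claims $|V(T_0(u,u_{k-1}u_k))|\ge B(k-1)+1=\tfrac{3k}{2}$, which improves on the bare part (1) bound $B(k-1)$ by one; this extra vertex comes from Claim~\ref{clm:almenos2}, which in the even case guarantees some $d_i\ge 2$ with $i<k$, forcing an additional hanging vertex somewhere along the segment beyond the minimal alternating pattern. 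Similarly part (4), $|V(T_0(u,u_ku_{k+1}))|\ge B(k)+1$, combines part (1) (or part (3)) applied up to $u_{k-1}$ with the vertex $u_k$ and the fact that $u_{k+1}$-side considerations plus Claim~\ref{clm:almenos2} (which always gives some $d_i\ge 2$ for $i\in\{2,\dots,k\}$) force at least one surplus vertex; one should split into $k$ even and $k$ odd and check the arithmetic $B(k)+1$ versus $B(k-1)+1+(\text{new vertices})$ in each parity.

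The main obstacle I expect is bookkeeping the surplus vertex in parts (3) and (4): one has to argue that the extra $d_i\ge 2$ promised by Claim~\ref{clm:almenos2} genuinely contributes a vertex \emph{not already counted} in the minimal alternating estimate of part (1), and to do this uniformly one probably wants a cleaner statement — something like "if $T'=T_0(u,u_iu_{i+1})$ and $d_j\ge 2$ for some $j\le i$, then $|V(T')|\ge B(i)+1$" — proved by the same induction as part (1) but tracking whether a "$d\ge 2$ event" has already occurred. The parity case analysis, and making sure the endpoint vertex $u$ (which has no constraint forcing it to be low-degree, but contributes the initial count correctly) and the vertex $u_{k+1}$ at the far end are handled without double counting, is where the routine-but-delicate work lies. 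Once that surplus-counting lemma is in place, parts (1)--(4) all follow by feeding the appropriate $i$ and parity into it together with Claims~\ref{clm:almenos2} and \ref{clm:consecutivos}; I would state and prove that lemma first and then derive the four itemized bounds as corollaries.
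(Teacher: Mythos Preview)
Your proposal is correct and uses the same structural inputs (Claims~\ref{clm:consecutivos} and~\ref{clm:almenos2}) as the paper, but the paper's execution is more direct: for part~(1) it simply observes that $T_0(u,u_iu_{i+1})$ contains the $i+1$ path vertices $u,u_1,\dots,u_i$ together with at least $\lfloor i/2\rfloor$ vertices adjacent to this path (since by Claim~\ref{clm:consecutivos} no two consecutive $u_j$'s can both have degree~$2$), and $i+1+\lfloor i/2\rfloor=B(i)$ --- no induction is needed. Part~(2) is handled the same way, applied to the path from $u_i$'s neighbor toward $u_i'$. For parts~(3) and~(4), your worry about bookkeeping the surplus vertex dissolves once you notice that the count in part~(1) only tallies vertices \emph{adjacent} to the main path; the index $j$ with $d_j\ge 2$ promised by Claim~\ref{clm:almenos2} supplies a vertex at distance~$2$ from the path, which is automatically new, so no auxiliary ``surplus-counting lemma'' is required.
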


{\itshape Proof of Claim 5.}
\begin{enumerate}
\item The tree $T'=T_0(u,u_iu_{i+1})$ contains the $i+1$ vertices of the $(u,u_i)$-path and at least $\lfloor i/2 \rfloor$ vertices adjacent to the $(u,u_i)$-path, by Claim~\ref{clm:consecutivos}. Therefore, $T'$ has at least $i+1+\lfloor i/2 \rfloor =A(i)$ vertices.

\item Let $T''$ be the connected component of $T-u_i$ containing the furthest vertex $u_i'$ from $u_i$ in $V(T(u_i))$.
Notice that the tree $T(u_i)$ contains vertex $u_i$ and at least the vertices of $T''$, so $|V(T(u_i))|\ge |V( T'')|+1$.
Moreover, $T''$ is a subtree of $T_0(u,u_{k+1}u_{k+2})$ and its vertices satisfy conditions in Claim~\ref{clm:consecutivos}. Finally, the same reasoning as in the previous item gives $|V( T'')|\ge A(d_i-1)$ and hence $|V(T_0(u_i))|\ge |V( T'')|+1\ge A(d_i-1)+1$.

\item Proceed as in the proof of item 1 and take into account that  Claim~\ref{clm:almenos2} ensures {in such a case} the existence of at least one more vertex.

\item As in the preceding item, Claim~\ref{clm:almenos2} ensures in such a case the existence of at least one more vertex.
\end{enumerate}

\par\medskip

\begin{clm}\label{clm:dimenorquei}
For every $i\in \{1,\dots ,k\}$, we have $d_i<i$.
\end{clm}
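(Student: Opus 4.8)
\noindent The plan is to argue by contradiction: suppose $d_i=i$ for some $i\in\{1,\dots,k\}$; since $d_1=0$ we may assume $i\ge 2$. Write $u_i=w_0,w_1,\dots,w_i=u_i'$ for the $u_i$--$u_i'$ path inside $T_0(u_i)$. Because $u_i$ lies on the diametral $(u,u')$-path and the $(u_i',u')$-path passes through $u_i$, we get $d(u_i',u')=d(u_i',u_i)+d(u_i,u')=i+(D-i)=D$, so $(u_i',u')$ is again an antipodal pair. Hence $u_i'$ is a leaf, and running the reasoning that yields $d_1=0$ (via Claim~\ref{clm:notwins}) for this new antipodal pair forces $\deg_{T_0}(w_{i-1})=2$. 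Therefore $u=u_0,u_1,\dots,u_i=w_0,w_1,\dots,w_i=u_i'$ is a path of length $2i$ whose two endpoints are leaves and whose two ``second'' vertices $u_1$ and $w_{i-1}$ have degree $2$; this is the structural input for the rest.

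\smallskip
\noindent Next I would delete the edge $u_iu_{i+1}$ and set $T'=T_0(u,u_iu_{i+1})$, $T''=T_0(u_{i+1},u_iu_{i+1})$, $n'=|V(T')|$; since $|V(T'')|<n$, $T''$ satisfies Inequality~\ref{eqn.cota}. Every vertex of $T'$ lies in some $T_0(u_j)$ with $0\le j\le i$, and $d_j\le j$ gives $d_{T'}(u_i,x)\le i$ for all $x\in V(T')$; as $d_{T'}(u_i,u)=i$, this yields $ecc_{T'}(u_i)=i$. Consequently the broadcast $f'$ with $f'(u_i)=i$ and $f'(x)=0$ for $x\ne u_i$ is a dominating $k$-broadcast on $T'$ (this is where $i\le k$ is used), so $\gamma_{B_k}(T')\le\omega(f')=i$.

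\smallskip
\noindent The crux is then to prove $n'\ge\frac{3(k+1)}{k+2}\,i$: this gives $\frac{\omega(f')}{n'}\le\frac{k+2}{3k+3}$, contradicting Claim~\ref{clm:tecnico} applied to $T'$ (equivalently, by Proposition~\ref{prop:general} and Lemma~\ref{lem.cotafracciogeneral}, $\gamma_{B_k}(T_0)\le\gamma_{B_k}(T')+\gamma_{B_k}(T'')\le i+\lceil\frac{k+2}{3k+3}(n-n')\rceil\le\lceil\frac{k+2}{3k+3}n\rceil$, against the minimality of $T_0$). To estimate $n'$, observe that along the length-$2i$ path of the first step Claim~\ref{clm:consecutivos} forbids two consecutive internal vertices from both having degree $2$; since $u_1$ and $w_{i-1}$ already have degree $2$, at least $i-1$ of the $2i-1$ internal vertices have degree $\ge 3$, and $u_i$ is always one of them (it is adjacent to $u_{i+1}$). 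Each such vertex different from $u_i$ contributes a private vertex of $T'$, so $n'\ge(2i+1)+(i-2)=3i-1$; moreover, when $i$ is odd the degree-$2$ vertices cannot occupy all odd positions (that would force $\deg_{T_0}(u_i)=2$), so at least $i$ internal vertices have degree $\ge 3$ and $n'\ge 3i$. Since $\frac{3(k+1)}{k+2}i=3i-\frac{3i}{k+2}<3i$, the needed bound $n'\ge\lceil\frac{3(k+1)}{k+2}i\rceil$ follows at once when $i$ is odd, and also when $i$ is even and $3i\ge k+2$.

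\smallskip
\noindent The main obstacle is the remaining case: $i$ even with $3i\le k+1$, where $n'\ge 3i-1$ falls exactly one vertex short. Here I expect one must work harder --- for instance cutting instead at $u_{2i}u_{2i+1}$ and correcting $f'$ with a small extra value on every branch off $u_{i+1},\dots,u_{2i}$ that the ball of radius $i$ around $u_i$ fails to cover, using that $i$ is small relative to $k$ and invoking Claims~\ref{clm:almenos2} and~\ref{clm:order} for the accounting --- and I anticipate that this bookkeeping is the genuinely delicate part of the proof.
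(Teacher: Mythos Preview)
Your odd-$i$ argument is essentially the paper's Case~1: the paper reaches the same bound $n'\ge 3i$ via Claim~\ref{clm:order} rather than by a direct count along the $(u,u_i')$-path, but the content is identical. Your observation that $(u_i',u')$ is again antipodal is also exactly what the paper exploits later (its Case~3). And your reduction of the even case to the single residual subcase ``$i$ even and $3i\le k+1$'' is correct --- in fact a bit tidier than the paper's presentation for large even $i$.

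The genuine gap is that you do not settle that residual subcase, and the plan you sketch --- cutting at $u_{2i}u_{2i+1}$ and patching $f'$ on each branch off $u_{i+1},\dots,u_{2i}$ --- is neither the paper's approach nor clearly workable: those branches can have depth up to $i+1,\dots,2i$ respectively, so the ``small extra value'' per branch need not stay small, and you give no estimate for the resulting cost/order ratio. The paper's fix is much more local. For even $i<k$ it cuts one edge further, at $u_{i+1}u_{i+2}$, and splits on $d_{i+1}$: if $d_{i+1}\le i-1$ the single extra vertex $u_{i+1}$ already pushes $n'$ to $3i$ while $f'(u_i)=i$ still dominates $T'$; if $d_{i+1}=i$ (the possibility $d_{i+1}=i+1$ having been absorbed by the odd argument at $i+1$) one takes $f'(u_i)=i+1\le k$ and picks up a third copy of $B(i-1)$ from $T_0(u_{i+1})$, which works for $i\ge 4$; only the tiny leftover $i=2$, $d_3=2$ is handled ad hoc by examining $T_0(u,u_2u_3)$ or $T_0(u,u_4u_5)$ according to $k$ and $d_4$. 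The case $i=k$ with $k$ even is treated separately, precisely via your antipodal-pair symmetry together with Claim~\ref{clm:order}(3), giving $n'\ge 3k+1$ against $\omega(f')=k$. So the missing idea is not a long global bookkeeping but a short local one: include $u_{i+1}$ (and, when needed, $T_0(u_{i+1})$) in $T'$ and case on $d_{i+1}$.
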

{\itshape Proof of Claim 6.}
We know that $d_1=0<1$. Suppose to the contrary that $d_i=i$, for some $i\in \{ 2,\dots , k \}$.

\noindent
{\itshape Case 1.} \emph{$d_i=i$ for some odd integer $i$.}
Let $T'=T_0(u,u_{i}u_{i+1})$ and note that $n'=|V(T')|=|V(T_0(u,u_{i-1}u_{i}))|+|V(T_0(u_i))|\ge A(i-1)+A(d_i-1)+1=2 A(i-1) +1=2\, \frac{3(i-1)+2}{2} +1=3i$. The function $f'$ such that $f'(u_i)=i$ and $f'(x)=0$, otherwise, is a dominating $k$-broadcast on $T'$ satisfying $\omega (f')=i$. Therefore,
$$\displaystyle \frac{\omega(f')}{n'} \le \frac{i}{3i}=\frac 13\le  \frac {k+2}{3k+3}$$
what is a contradiction with Claim~\ref{clm:tecnico}.

\noindent
{\itshape Case 2.} \emph{{$d_i=i$ for some even integer $i$ and $i<k$.}}
We may assume that $d_{i+1}\not= i+1$, otherwise we proceed as in Case 1.
Let $T'=T(u,u_{i+1}u_{i+2})$.
We distinguish two subcases.
    \begin{enumerate}[{\itshape{2.}1}]
      \item   If $d_{i+1}\le i-1$, then the function $f'$ such that $f'(u_i)=i$ and $f'(x)=0$, otherwise,
 is a dominating $k$-broadcast on $T'$ satisfying $\omega (f')=i$.
     On the other hand, $n'=|V(T')|=|V(T_0(u,u_{i-1}u_{i}))|+|V(T_0(u_i))|+|V(T_0(u_{i+1}))|\geq A(i-1)+(A(d_i-1)+1)+1=2 A(i-1) +2\ge 2\frac {3(i-1)+1}{2}+2=3i$. Then,
$$\displaystyle \frac{\omega (f')}{n'}\le \frac{i}{3i} = \frac {1}{3} \le  \frac {k+2}{3k+3},$$
a contradiction again.

\item    {If $d_{i+1}= i$ and $i\ge 4$,} {then} the function $f'$ such that $f'(u_i)=i+1$ and $f'(x)=0$, otherwise, is a dominating $k$-broadcast on $T'$ satisfying $\omega (f')=i+1$. Moreover,  $n'=|V(T')|=|V(T_0(u,u_{i-1}u_{i}))|+|V(T_0(u_i))|+|V(T_0(u_{i+1}))|\geq A(i-1)+(A(d_i-1)+1)+(A(d_{i+1}-1)+1)=3\, A(i-1) +2\ge 3\frac {3(i-1)+1}{2}+2=\frac {9i-2}{2}$. Then,
    $$\displaystyle \frac{\omega (f')}{n'}\le \frac{2(i+1)}{9i-2} \le \frac{1}3 \le \frac {k+2}{3k+3},$$
a contradiction.

\item  If $d_{i+1}= i$  and $i=2$, then  $d_2=d_3=2$.
Firstly note that in case $k\in \{3,4\}$, the tree $T''=T_0(u,u_2u_3)$ has order $n''\geq 5$ and the function $f''$ such that  $f''(u_2)=2$ and $f''(x)=0$, otherwise, is a dominating k-broadcast  satisfying
 $\omega (f'')=2$ so,
  $$\displaystyle \frac{\omega (f'')}{n''}\le \frac {2}{5} \le  \frac {k+2}{3k+3}\in\bigg\{\frac{5}{12}, \frac{6}{15}\bigg\}.$$

Now, assume that $k\geq 5$ and consider the tree $T''=T(u,u_4u_5)$ of order $n''$.
If $d_4\le 2$, then $n''\ge 9$ and  the function $f''$ such that  $f''(u_3)=3$ and $f''(x)=0$, otherwise, is a dominating k-broadcast  satisfying
 $\omega (f'')=3$ so,
  $$\displaystyle \frac{\omega (f'')}{n''}\le \frac {3}{9}=\frac {1}{3} \le  \frac {k+2}{3k+3}.$$

If $d_4\in \{ 3,4 \}$, then it is easy to check that $n''\ge 13$ and the function $f''$ such that  $f''(u_4)=4$ and $f''(x)=0$, otherwise, is a dominating k-broadcast, satisfying $\omega (f'')=4$.
In this case, $$\displaystyle \frac{\omega (f'')}{n''}\le \frac {4}{13}<\frac {1}{3} \le  \frac {k+2}{3k+3}.$$
In all cases we obtain a contradiction.
\end{enumerate}

\noindent
{\itshape Case 3.} \emph{{$d_k=k$ and $k$ is even.}} In this case, $d(u_k, u'_k)=k$ and therefore $u'_k$ and  $u'$ are antipodal  vertices of $T_0$. Let $z_k$ be the neighbor of $u_k$ in the only $(u_k,u'_k)$-path and consider $T'=T_0(u,u_{k}u_{k+1})$ and $T''=T_0(u'_k,u_{k}z_k)$. Note that $T''$ satisfies the same conditions as  $T_0(u,u_{k-1}u_{k})$, so by Claim~\ref{clm:order} $|V(T_0(u,u_{k-1}u_{k}))|\ge A(k-1)+1=\frac{3k}{2}$ and $|V(T'')|\geq A(k-1)+1=\frac{3k}{2}$. Thus, $n'=|V(T')|=|V(T_0(u,u_{k-1}u_{k}))|+|V(T'')|+1\geq 2 \frac{3k}{2} +1= 3k+1$.

Consider the dominating  k-broadcast  function such that  $f'(u_k)=k$ and $f'(x)=0$, otherwise. Then, $\omega (f')=k$. It is straightforward to check that if $k\ge 4$, then
 $$\displaystyle \frac{\omega (f')}{n'}\le \frac{k}{3k+1}\le \frac 13\le \frac {k+2}{3k+3}$$
 that contradicts Claim~\ref{clm:tecnico}.
\par\medskip

Now, we proceed with the proof of the Theorem.
Let  $i_0$ be the minimum integer such that vertex $u_{i_0}'$ is one of antipodal vertices of $u$ in the tree $T(u,u_ku_{k+1})$, that is:
$$i_0=\min \{ i: 1\le i\le k\textrm{ and }d(u,u_i')\ge d(u,u_j')\textrm{ for all }j\in \{ 1,\dots ,k\}\textrm{ such that  }i\not=j \}.$$
(see Figure~\ref{fig:i0centerw}a).

\begin{figure}[h]
\begin{center}
\includegraphics [width=0.38\textwidth]{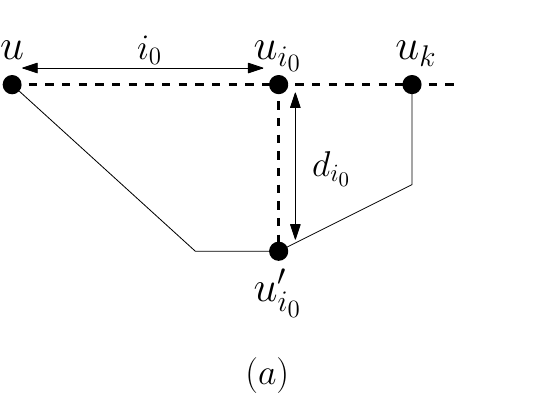} \hspace{1cm} \includegraphics [width=0.325\textwidth]{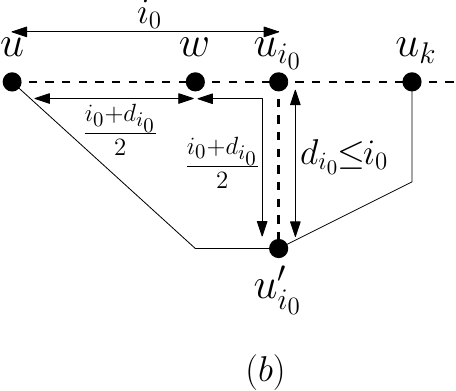}
\caption{
{Vertex  $u_{i_0}'$ is one of the furthest vertices from $u$ in the tree $T(u,u_ku_{k+1})$.}
}\label{fig:i0centerw}
\end{center}
\end{figure}

Observe that, on the one hand, $i_0 > k/2$, otherwise $d(u,u_{i_0}')<k=d(u,u_k)\le d(u,u_k')$, which contradicts the definition of $i_0$. So, $i_0 \ge 2$.
On the other hand, $k\le i_0 +d_{i_0}=d(u,u_{i_0}')\le 2k$.
Moreover, $d_{i_0}\ge 1$, {as} otherwise  $d_{i_0-1}$ must be 0 by definition of $i_0$, {contradicting} Claim~\ref{clm:consecutivos}. We consider the following cases.

\noindent
{\itshape Case 1.}  {If $i_0 \le k-1$, then let $T'=T(u,u_k u_{k+1})$.} On the one hand, all the vertices of $T'$ are at distance at most $\lceil d(u,u_{i_0}')/2 \rceil $ from a center $w$ of the $(u,u_{i_0}')$-path (see Figure~\ref{fig:i0centerw}b).
Thus, the function $f'$ such that $f'(w)=\lceil d(u,u_{i_0}')/2 \rceil = \lceil ( i_0 +d_{i_0})/2\rceil$ and $f'(x)=0$, otherwise,
is a  dominating $k$-broadcast on $T'$.
On the other hand, $T'$ contains  vertex $u_{i_0+1}$ and the vertices of trees $T(u,u_{i_0-1}u_{i_0})$ and $T(u_{i_0})$.
Hence, $n'\ge 1+ A(i_0-1)+(A(d_{i_0}-1)+1).$
Now, we distinguish the following cases taking into account the parity of $i_0$ and $d_{i_0}$.
    \begin{enumerate}[{\itshape {1.}1}]
      \item {If $i_0$ and $d_{i_0} $ are odd, then}
     $ n'\ge  \frac{3(i_0 +d_{i_0})+2}{2}$
     and
      $ \omega (f')= \frac{i_0 +d_{i_0}}{2}.$
      Thus,
      $$\frac{\omega (f')}{n'}\le \frac{i_0 +d_{i_0}}{3(i_0 +d_{i_0})+2} \le \frac 13\le \frac{k+2}{3k+3}.$$

       \item {If $i_0$ and $d_{i_0} $ are even, then}
           $ n'\ge
           \frac{3(i_0 +d_{i_0})}{2}$
 and
$ \omega (f')= \frac{i_0 +d_{i_0}}{2}.$
      Thus,
      $$\frac{\omega (f')}{n'}\le \frac{i_0 +d_{i_0}}{3(i_0 +d_{i_0})} = \frac 13\le \frac{k+2}{3k+3}.$$

       \item {If $i_0$ and $d_{i_0} $ have distinct parity, then}
         $ n'\ge
         \frac{3(i_0 +d_{i_0})+1}{2}$
          and
     $\omega (f')= \frac{i_0 +d_{i_0}+1}{2}.$
      Since $2k+1\le 2(i_0 +d_{i_0})+1\le 3(i_0 +d_{i_0})$, it can be easily checked that
      $$\frac{\omega (f')}{n'}\le \frac{i_0 +d_{i_0}+1}{3(i_0 +d_{i_0})+1} \le \frac{k+2}{3k+3}.$$
    \end{enumerate}

\noindent
{\itshape Case 2.}  If $i_0 =k$, {then
    we distinguish the following cases.}
    \begin{enumerate}[{\itshape {2.}1}]
      \item $d_k > d_{k+1}$. Let $T'=T(u,u_{k+1}u_{k+2})$ and consider the dominating $k$-broadcast $f'$ on $T'$ such that  $f'(w)=\Big\lceil \frac{k+d_k}{2} \Big\rceil$, where $w$ is a center of the $(u,u_k')$-path, and proceed analogously as in Case 1.
\smallskip

       \item $d_k \le d_{k+1}$. Recall that $d_{k+1} \ge d_k\ge 1$. Let $T'=T(u,u_{k+1}u_{k+2})$. The order $n'$ of $T'$ satisfies
       $n'= |V(T(u,u_ku_{k+1})|+|V(T(u_{k+1})) |\ge (A(k)+1)+(A(d_{k+1}-1)+1)= A(k)+A(d_{k+1}-1)+2.$
We distinguish the following subcases:
\medskip
\begin{enumerate}
       \item  $d_{k+1}\le k-1$.
       Since $\Big\lceil \frac{(k+1)+d_{k+1}}{2} \Big\rceil \le k$, the function  $f'$ such that
       $f'(w)=\Big\lceil \frac{(k+1)+d_{k+1}}{2} \Big\rceil $
       for a center $w$ of the $(u,u_{k+1}')$-path, and $f'(x)=0$, otherwise,
       is a dominating $k$-broadcast on $T'$
       \begin{enumerate}[i)]
 \item {If $k+1$ and $d_{k+1}$ are even,}
       then $n'\ge
       \frac{3k+3d_{k+1}+3}{2}$
       and $\omega(f')=\frac{(k+1)+d_{k+1}}{2}$.
       Thus,
              $$\frac{\omega(f')}{n'}\le \frac{k+d_{k+1}+1}{3k+3d_{k+1}+3}= \frac 13 \le \frac {k+2}{3k+3}.$$
\item  {If $k+1$ and $d_{k+1}$ are odd,}
       then $n'\ge
       \frac{3k+3d_{k+1}+5}{2}$
       and $\omega(f')=\frac{(k+1)+d_{k+1}}{2}$.
       Thus,
              $$\frac{\omega(f')}{n'}\le \frac{k+d_{k+1}+1}{3k+3d_{k+1}+5}\leq \frac 13 \le \frac {k+2}{3k+3}.$$
 \item  {If $k+1$ and $d_{k+1}$ have different parity,}\
        then $n'\ge         \frac{3k+3d_{k+1}+4}{2}$,
       $\omega(f')=\frac{(k+1)+d_{k+1}+1}{2}$ and
       $$\displaystyle \frac{\omega (f')}{n'}\le \frac{k+d_{k+1}+2}{3k+3d_{k+1}+4}\le \frac{k+2}{3k+3}.$$
 \end{enumerate}
       \item $d_{k+1}=k$.
       By Claim~\ref{clm:dimenorquei},  $d_i<i$ for $i\in \{ 1,\dots , k\}$, so the function $f'$
        such that $f'(u_{k+1})=k$, $f'(u)=1$ and $f'(x)=0$, otherwise, is a dominating $k$-broadcast on $T'$ satisfying $\omega(f')=k+1$.
        In this case $n'\geq A(k)+A(k-1)+2=\frac{6k+4}{2}$. Thus,
        $$\displaystyle \frac{\omega (f')}{n'}\le \frac{2(k+1)}{6k+4}\le \frac{k+2}{3k+3}.$$  %
 \item $d_{k+1}=k+1$.
        Let $x_1,\dots ,x_s$, $s\ge 1$, be the vertices of $T(u_{k+1})$ at distance $k+1$ from $u_{k+1}$.
For each $x_i$, let $z_i$ be the vertex adjacent to $u_{k+1}$ on the $(x_i,u_{k+1})$-path (see Figure~\ref{fig:casec}).

\begin{figure}[h]
\begin{center}
\includegraphics [width=0.35\textwidth]{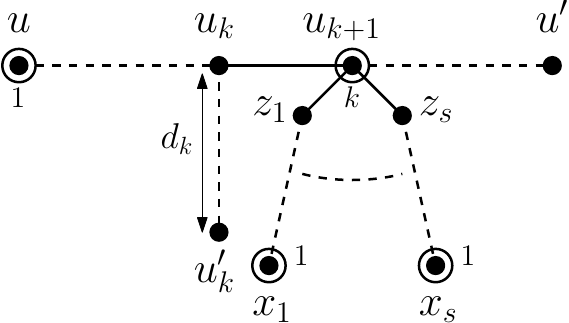}
\caption{Vertex $z_i$ is the only vertex adjacent to $u_{k+1}$ on the $(x_i,u_{k+1})$-path, for every $i\in \{1,\dots , s\}$.}\label{fig:casec}
\end{center}
\end{figure}
Since $x_i$ and $u'$ are antipodal,  the preceding claims apply also by interchanging  $u$ and $x_i$.
Thus, by Claim~\ref{clm:dimenorquei},
$x_i$ is the only vertex at distance $k+1$ from $u_{k+1}$ in $T(x_i,u_{k+1}z_i)$.
Consider the function $f'$ such that
$f'(u_{k+1})=k$, $f'(u)=f'(x_1)=\dots =f'(x_s)=1$ and $f'(x)=0$, otherwise.
By Claim~\ref{clm:dimenorquei},  $d_i<i$ for $i\in \{ 1,\dots , k\}$,
{and thus} $f'$ is a {dominating} $k$-broadcast function that satisfies
$\omega (f')=k+1+s$. Note that $n'=|V(T')|= |V(T(u,u_ku_{k+1}))| + \sum_{j=1}^s  |V(x_j,u_{k+1}z_j)| + 1\ge (s+1)(A(k)+1) +1.$

If $k$ is odd, {then}
$n' \ge    (s+1)\,  (\frac{3k+1}{2} +1) +1=\frac{(s+1)(3k+3)+2 }{2},$ {implying that}
$$\frac{\omega(f')}{n'}\le \frac{ 2(k+1+s) }{(s+1)(3k+3)+2 }\le  \frac{ k+2  }{ 3k+3},$$
where the second inequality holds for $s\geq 1$ and $k\geq 3$.

Finally, if $k$ is even, {then}
$n' \ge    (s+1)\,  (\frac{3k+2}{2} +1) +1=\frac{(s+1)(3k+4)+2 }{2}.$
{Thus}, taking into account the preceding case,
$$\frac{\omega(f')}{n'}\le \frac{ 2(k+1+s) }{(s+1)(3k+4)+2 }\le  \frac{ 2(k+1+s) }{(s+1)(3k+3)+2 }\le \frac{ k+2  }{ 3k+3}. \text{ \rlap{$ \qquad \qquad \quad \Box$} }$$
\end{enumerate}
\end{enumerate}

The following corollary sums up the upper bounds of domination $k$-broadcast numbers for every $k\geq 1$. Note that extreme cases $k=1$ and $k=rad(G)$ are known, and we would like to point out that our general upper bound, despite does not improve the old ones for these cases, it is quite close to them. If $k=1$, then $\big\lceil \frac{k+2}{k+1}\frac{n}3 \big\rceil =\big\lceil \frac{n}{2} \big\rceil$, that equals $\big\lfloor\frac{n}{2}\big\rfloor$ if $n$ is even and it is one unit larger if $n$ is odd. On the other hand, if $k\geq rad(G)$, then $\big\lceil \frac{k+2}{k+1}\frac{n}3 \big\rceil $ tents to $\big\lceil\frac{n}{3}\big\rceil$ when $k$ increases, so for graphs with large enough radius both bounds are as close as desired.

\begin{corollary}
Let $G$ be a graph with $rad(G)=r$. Then,

$$\gamma_{\stackrel{}{B_k}} (G)\leq
\left\{
\begin{array}{ll}
\big\lfloor\frac{n}{2}\big\rfloor & \text{if } k=1\\
&\\
\big\lceil \frac{k+2}{k+1}\frac{n}3 \big\rceil & \text{if }  1< k< r\\
&\\
\big\lceil\frac{n}{3}\big\rceil & \text{if } k\geq r
\end{array}
\right.
$$
\end{corollary}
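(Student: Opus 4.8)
The plan is to derive each of the three ranges of $k$ by assembling results already at hand, so that the corollary becomes a clean repackaging rather than a new argument. For $k=1$ we have $\gamma_{\stackrel{}{B_1}}(G)=\gamma(G)$, so the bound $\gamma_{\stackrel{}{B_1}}(G)\le\lfloor n/2\rfloor$ is exactly Ore's classical bound on the domination number \cite{Ore62}. For $k\ge r=rad(G)$, as recalled in Section~\ref{sec:spannig} we have $\gamma_{\stackrel{}{B_k}}(G)=\gamma_{\stackrel{}{B_r}}(G)=\gamma_{\stackrel{}{B}}(G)$, so the Herke--Mynhardt bound $\gamma_{\stackrel{}{B}}(G)\le\lceil n/3\rceil$ \cite{Herke07,HM09} settles this range.

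For the intermediate range $1<k<r$ I would split once more. When $k=2$, the required bound is $\gamma_{\stackrel{}{B_2}}(G)\le\lceil 4n/9\rceil=\big\lceil\frac{k+2}{k+1}\,\frac n3\big\rceil$, which is the main theorem of \cite{CHMPP17}. When $k\ge 3$, I would fix an arbitrary spanning tree $T$ of $G$; since deleting edges from a connected graph never decreases pairwise distances, it never decreases the radius, so $rad(T)\ge rad(G)=r>k$. Hence Theorem~\ref{theorem.cotaBktrees} applies to $T$ and yields $\gamma_{\stackrel{}{B_k}}(T)\le\big\lceil\frac{k+2}{k+1}\,\frac n3\big\rceil$. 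Because $d_G(x,y)\le d_T(x,y)$ for all vertices $x,y$, any dominating $k$-broadcast on $T$ is a dominating $k$-broadcast on $G$ (alternatively one may quote Theorem~\ref{thm:spanning} directly), so $\gamma_{\stackrel{}{B_k}}(G)\le\gamma_{\stackrel{}{B_k}}(T)\le\big\lceil\frac{k+2}{k+1}\,\frac n3\big\rceil$.

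There is no real obstacle here: the argument only quotes Ore's theorem, the Herke--Mynhardt bound, the $k=2$ result of \cite{CHMPP17}, and the new tree bound of Theorem~\ref{theorem.cotaBktrees}, together with the trivial monotonicity $\lceil n/3\rceil\le\big\lceil\frac{k+2}{k+1}\,\frac n3\big\rceil$ used where the pieces are stitched together. The single point that deserves explicit justification is the observation that a spanning subtree of $G$ has radius at least $rad(G)$, since that is precisely what secures the hypothesis $k<rad(T)$ needed to invoke Theorem~\ref{theorem.cotaBktrees}.
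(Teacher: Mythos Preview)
Your proposal is correct and follows essentially the same route as the paper: Ore for $k=1$, Herke--Mynhardt for $k\ge r$, \cite{CHMPP17} for $k=2$, and Theorem~\ref{theorem.cotaBktrees} together with a spanning-tree reduction for $3\le k<r$. The one (harmless) difference is that you observe only the trivial inequality $\gamma_{\stackrel{}{B_k}}(G)\le\gamma_{\stackrel{}{B_k}}(T)$ for a single spanning tree is needed, whereas the paper invokes the full equality of Theorem~\ref{thm:spanning}; your explicit justification that $rad(T)\ge rad(G)>k$ is exactly the point that makes Theorem~\ref{theorem.cotaBktrees} applicable, and the paper leaves this implicit.
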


\begin{proof}

If $k=1$, then $\gamma_{\stackrel{}{B_k}} (G)=\gamma(G)$ and the inequality is the classical bound by Ore \cite{Ore62}.

If $k\geq r$, then $\gamma_{\stackrel{}{B_k}} (G)=\gamma_{\stackrel{}{B}}(G)$ and the upper bound, for any graph $G$, can be found in \cite{Herke07}.

For case $k=2$ see \cite{CHMPP17}. Finally, if $3\leq k<r$, then the result comes from Theorem~\ref{thm:spanning} and Theorem~\ref{theorem.cotaBktrees}.
\end{proof}

We end this section by presenting an example of a tree attaining the upper bound for every $k\geq 3$. Having in mind the proof of Theorem~\ref{theorem.cotaBktrees} and the special conditions that such trees must fulfill, we conjecture that trees in the following proposition are the only ones reaching the bound.

\begin{proposition}
For every $k\ge 3$, there exists a tree {$T$ such that  $rad(T) > k$ and} $\displaystyle \gamma_{\stackrel{}{B_k}} (T) = \bigg\lceil \frac{k+2}{k+1} \, \,  \frac{n}3 \bigg\rceil$.
\end{proposition}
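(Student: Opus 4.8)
The plan is, for each $k\ge 3$, to exhibit an explicit tree $T=T_k$ with $rad(T)>k$ whose order $n=n(k)$ is chosen so that $\frac{k+2}{k+1}\cdot\frac n3$ is an integer (so the ceiling in~\eqref{eqn.cota} introduces no slack), and then to prove the two inequalities $\gamma_{B_k}(T)\le\lceil\frac{k+2}{k+1}\frac n3\rceil$ and $\gamma_{B_k}(T)\ge\lceil\frac{k+2}{k+1}\frac n3\rceil$ separately. The first one is immediate: since $k<rad(T)$, Theorem~\ref{theorem.cotaBktrees} already gives it (and one may also just write down an explicit optimal dominating $k$-broadcast on $T$). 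So the whole content of the statement is the matching lower bound. For the construction I would take $T_k$ to be the tree that the equality cases in the proof of Theorem~\ref{theorem.cotaBktrees} point to: a caterpillar-type tree whose spine is a long path carrying, at a periodic set of positions, small pendant pieces (single leaves and pendant paths of length $\le 2$) of exactly the orders $B(d_i-1)+1$ that appear in Claim~\ref{clm:order}; the spine is taken long enough that $rad(T_k)>k$, and the period and the spine length are tuned so that $n(k)$ is a multiple of $3(k+1)$. The positions of the pendant pieces must be spaced so that (a) no ball of radius $\le k$ is ``efficient'' --- such a ball meets the spine in at most $2k+1$ vertices and can absorb only a controlled number of pendant vertices --- and (b) the pendant pieces obstruct the clean ``one value-$1$ broadcast per $P_3$'' covering that a plain path admits (for instance, a pendant path of length $2$ hanging from a spine vertex that is the second vertex of a length-$2$ spine stub cannot be covered together with that stub by a single $K_{1,2}$).

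For the lower bound I would run an induction on the number of periods, mirroring the treatment of the case $k=2$ in~\cite{CHMPP17}. Let $f$ be an optimal dominating $k$-broadcast on $T_k$; by Proposition~\ref{prop:general} we may assume $f$ vanishes on every leaf, so all of its weight sits on the spine. Peel off the first period as a subtree $T'=T_k(w,e)$ of order $3(k+1)$ (or of the nearest multiple of $3$, chosen so that Lemma~\ref{lem.cotafracciogeneral} and Proposition~\ref{prop:general}(1) apply exactly as in Claims~\ref{clm:tecnico} and~\ref{clm:consecutivos}), and show that the restriction of $f$ to $T'$ costs at least $\frac{k+2}{3(k+1)}|V(T')|$, i.e.\ at least one unit more than $\tfrac13|V(T')|$; this is where the pendant piece is used, since neither value-$1$ broadcasts (obstructed tiling, property~(b)) nor a single radius-$k$ broadcast (too short a reach, property~(a)) can cover that period at cost $\tfrac13|V(T')|$. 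Combining with the inductive bound on the complementary subtree and iterating to the end gives $\omega(f)\ge\frac{k+2}{k+1}\cdot\frac n3=\lceil\frac{k+2}{k+1}\frac n3\rceil$; minimising over $f$ yields $\gamma_{B_k}(T_k)\ge\lceil\frac{k+2}{k+1}\frac n3\rceil$.

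The hard part is the lower bound, and within it the design of $T_k$ so that \emph{every} dominating $k$-broadcast --- not just a well-structured one --- is forced to pay the surplus on each period. Since optimal dominating $k$-broadcasts need not be efficient (cf.\ the graph in Figure~\ref{fig:non_efficient}), one cannot pre-normalise $f$; one must simultaneously defeat the ``cheap'' broadcasts that use a few large-value vertices and those that use many value-$1$ vertices, and these two requirements pull the construction in opposite directions --- spreading the pendant pieces apart defeats the first but makes value-$1$ covering easier, and clustering them does the reverse --- so the spacing has to be chosen very carefully. A secondary, more bookkeeping-type, difficulty is to arrange that $\frac{k+2}{k+1}\cdot\frac n3$ lands exactly on an integer and that the per-period surplus is exactly $1$, and to dispose of the small cases $k=3$ and $k=4$ by hand, where the general estimates are loosest.
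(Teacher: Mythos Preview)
Your plan diverges substantially from the paper's argument, and the central step of your lower bound is not justified.

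\textbf{What the paper actually does.} The paper takes a \emph{single} small tree, not a long periodic one: $T_k$ is the caterpillar on the spine $u_1u_2\cdots u_{2k+1}$ with one pendant leaf at $u_1$, at $u_{2k+1}$, and at each $u_i$ with $i$ even; so $n=3k+3$, $rad(T_k)=k+1$, and the target value is $\lceil\frac{k+2}{k+1}\cdot\frac{n}{3}\rceil=k+2$. The upper bound is immediate since the $k+2$ support vertices dominate. The lower bound is by induction on $k$ (not on ``periods''): assuming $\gamma_{B_{k-1}}(T_{k-1})=k+1$, suppose some dominating $k$-broadcast $f$ on $T_k$ has cost $\le k+1$. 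If any vertex $x$ had $f(x)=k$, then $x$ would have to lie within distance $k-1$ of one of the two end leaves $u_1',u_{2k+1}'$, say $u_1'$; but then neither $u_{2k}'$ nor $u_{2k+1}'$ is reached, and the remaining budget of at most $1$ cannot cover both (they have different supports). Hence $f$ takes only values $\le k-1$, i.e.\ $f$ is a dominating $(k-1)$-broadcast. Since $T_{k-1}$ sits inside $T_k$ as the subtree on $V(T_k)\setminus\{u_1,u_1',u_2'\}$, a short case analysis on the vertex that $f$-dominates $u_1'$ produces a dominating $(k-1)$-broadcast on $T_{k-1}$ of cost $\le k$, contradicting the inductive hypothesis. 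No multi-period counting, no per-period surplus bookkeeping, and no appeal to Theorem~\ref{theorem.cotaBktrees} for the upper bound.

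\textbf{Where your argument has a gap.} Your key step is: ``peel off the first period $T'=T_k(w,e)$ and show that the restriction of $f$ to $T'$ costs at least $\frac{k+2}{3(k+1)}|V(T')|$''. This does not follow. The restriction $f|_{V(T')}$ need not be a dominating $k$-broadcast on $T'$: vertices of $T'$ near the cut edge $e$ may be $f$-dominated by vertices outside $T'$, so in principle $\omega(f|_{V(T')})$ can be strictly below $\gamma_{B_k}(T')$, even $0$. Proposition~\ref{prop:general}(1) and the reasoning of Claims~\ref{clm:tecnico}--\ref{clm:consecutivos} go the other way (they give \emph{upper} bounds on $\gamma_{B_k}(T_0)$ via cut-edges), so they cannot be invoked here. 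Your sketch then argues only against two extreme shapes of broadcast (all values $1$, or a single value-$k$ vertex), which is far from exhausting the possibilities; and you yourself flag that mixed broadcasts are the hard case. Without a concrete construction and a genuine charging argument that accounts for cross-period domination, the lower bound is not established. The paper sidesteps all of this by working with a tree that \emph{is} a single period and inducting on $k$ instead.
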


\begin{proof}
Let $k\ge 3$ and consider the tree $T_k$ obtained from a path $P=u_1u_2\dots u_{2k+1}$ by hanging a leaf $u_i'$ to each vertex
$u_{i}$, for $i=1$, $i=2k+1$ and for $i$ even with $2\le i\le 2k$ (see Figure~\ref{fig:CotaBkTight}). Note that $T_k$ has order $3k+3$ and radius $k+1$.

\begin{figure}[h]
\begin{center}
\includegraphics [width=0.9\textwidth]{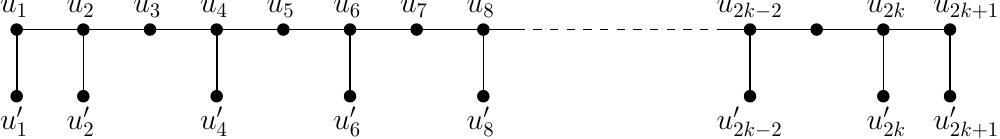}
\caption{The tree $T_k$ of order $n=3k+3$ satisfies $\gamma_{\stackrel{}{B_k}} (T_k) = \big\lceil \frac{k+2}{k+1} \, \,  \frac{n}3 \big\rceil$. }\label{fig:CotaBkTight}
\end{center}
\end{figure}

We claim that $\gamma_{\stackrel{}{B_k}} (T_k)=k+2=\big\lceil \frac{k+2}{k+1} \, \,  \frac{3k+3}3 \big\rceil$. Observe first that support vertices are a minimum dominating set, and thus
$\gamma_{\stackrel{}{B_k}} (T_k)\le \gamma (T_k)=k+2$. To prove the reverse inequality we proceed by induction on $k\ge 3$. It is easy to check that $\gamma_{\stackrel{}{B_3}} (T_3)= 5$.

Let $k\ge 4$ be and assume that $\gamma_{\stackrel{}{B_{k-1}}} (T_{k-1})=  k+1$. Suppose to the contrary that $\gamma_{\stackrel{}{B_k}} (T_k)\le  k+1$ and let $f$ be an optimal dominating $k$-broadcast on $T_k$.
On the one hand, by Proposition~\ref{prop:general}, we may assume that $f(u)=0$ for every leaf $u$.
If there exists a vertex $x$ with $f(x)=k$,
then could exists another vertex $y$ such that $f(y)\in\{0,1\}$ and the rest of vertices satisfy $f(z)=0$.
We may assume without lose of generality that leaf $u'_1$ is $f$-dominated by $x$, so $x\in \{u_1, \dots ,u_k\}$ and $x$ does not $f$-dominate neither $u'_{2k}$ nor $u'_{2k+1}$. These leaves have different support vertices, so a single vertex $y$ with $f(y)=1$ can not $f$-dominate both of them at the same time. Therefore, $f(x)\leq k-1$ for every vertex in $T_k$ and $f$ is a dominating $(k-1)$-broadcast of $T_k$. Notice that $T_{k-1}$ is isomorphic to the tree induced by $V(T_k)\setminus \{u_{1}, u'_{1}, u'_{2}\}$. Let $x$ be the vertex that $f$-dominates $u'_{2k+1}$.

If $f(x)=1$, then $x=u_{1}$ and it does not $f$-dominate $u'_{2}$, therefore the vertex that $f$-dominates $u'_{2}$ also $f$-dominates its support vertex $u_{2}$. Hence, the restriction of $f$ to the vertices of $T_{k-1}$ is a dominating $(k-1)$-broadcast with cost at most $k$, which contradicts that $\gamma_{\stackrel{}{B_{k-1}}} (T_{k-1})=k+1$.

If $f(x)=2$, we may assume without lose of generality that $x=u_2$. Then, the function $g$ defined on the set of vertices of $T_{k-1}$ such that $g(u)=f(u)$ if $u\not= u_2,u_3$, $g(u_2)=0$ and $g(u_3)=1$  is a dominating $(k-1)$-broadcast on $T_{k-1}$ with $\omega (g)=k$, which is again a contradiction.

Finally, if $f(x)\ge 3$, then we may assume without lose of generality that $x=u_j$ with $j\geq 3$. Then, $d(u_2,u_j)\le f(x)-2$ and $d(u_2,u_{j+1})\le f(x)-1$. In such a case, the function $g$ on $V(T_{k-1})$ such that $g(u)=f(u)$ if $u\not= u_j, u_{j+1}$, $g(u_j)=0$ and $g(u_{j+1})=f(x)-1$
is a dominating $(k-1)$-broadcast on $T_{k-1}$ with $\omega (g)=k$, a contradiction.
\end{proof}

\section{NP-completeness}\label{sec:complexity}

 It is well-known that the {\sc dominating set problem} is an NP-complete decision problem~\cite{GJ79}, that remains NP-complete when instances are restricted to particular graph classes, for instance bipartite graphs or chordal graphs. It is also known that it can be solved in linear time for trees. This behaviour is shared by others domination related decision problems, which are NP-complete in different graph classes, and linear in some others. However, dominating broadcast follows a different way, since a polynomial algorithm, with complexity $O(n^6)$, to compute an optimal broadcast domination function of a graph $G$ was quite surprisingly obtained in~\cite{HL06}. On the other hand, a linear algorithm for trees can be found in \cite{DDH09}. In the case of limited broadcast domination, {\sc domination $2$-broadcast problem} was proved to be NP-complete in \cite{CHMPP17} and we next show that a similar argument gives that the general case $k\geq 3$ it is also NP-complete.

\par\bigskip

\fbox{
\begin{minipage}{10.5cm}
{\sc dominating $k$-broadcast problem}\\
{\sc instance}: A graph $G$ of order $n$ and integers $k\geq 3$ and $c\geq 2$.\\
{\sc question}: Does $G$ have a dominating $k$-broadcast with cost $\leq c$?
\end{minipage}
}

\par\bigskip

In the next result we show that this decision problem is NP-complete for general graphs, by using a reduction of {\sc 3-sat problem} similar to the one used in {\sc dominating set problem} in \cite{GJ79}.

\par\bigskip

\begin{theorem}
{\sc dominating $k$-broadcast problem} is NP-complete.
\end{theorem}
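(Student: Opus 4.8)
The plan is to establish membership in NP and then give a polynomial-time many-one reduction from \textsc{3-sat}, following the template of Garey and Johnson's classical NP-completeness proof for \textsc{dominating set}. Membership in NP is immediate: given a graph $G$, an integer $k$, and a candidate function $f\colon V(G)\to\{0,1,\dots,k\}$, one can verify in polynomial time (after a BFS from each vertex with $f(v)\ge 1$) both that $\omega(f)=\sum_{v}f(v)\le c$ and that every vertex is $f$-dominated, so $f$ itself is a succinct certificate.

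For the hardness part, I would start from an instance of \textsc{3-sat} with variables $x_1,\dots,x_p$ and clauses $C_1,\dots,C_q$ and build a graph $G$ as follows. For each variable $x_i$ create a ``variable gadget'' consisting of the two literal vertices $x_i$ and $\overline{x_i}$ joined by an edge, together with one or more pendant vertices attached to force that any low-cost dominating $k$-broadcast must place a positive value exactly on one of the two literal vertices (and, crucially, a value of $1$ there rather than anything larger --- this is where the bound $c$ must be tuned so that spending more than $1$ at any vertex is never affordable). For each clause $C_j$ create a single clause vertex $c_j$ adjacent precisely to the (up to three) literal vertices appearing in $C_j$. One then sets $c=p$ (the number of variables). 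The intended correspondence is: a truth assignment satisfying all clauses $\longleftrightarrow$ a dominating $k$-broadcast $f$ with $f(\ell)=1$ for exactly one literal $\ell$ per variable gadget (the true one) and $f\equiv 0$ elsewhere, of total cost exactly $p$.

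The key steps, in order, are: (i) verify membership in NP as above; (ii) describe the gadget construction precisely and observe it is polynomial in $p+q$; (iii) prove the forward implication --- if the \textsc{3-sat} instance is satisfiable, the assignment yields a dominating $k$-broadcast of cost $p$, since each clause vertex is dominated by a true literal and each variable-gadget pendant is dominated by its attached literal vertex; (iv) prove the backward implication --- if $G$ has a dominating $k$-broadcast $f$ with $\omega(f)\le p$, argue first via the pendant structure of the variable gadgets that $f$ must assign positive value to at least one vertex ``belonging to'' each of the $p$ gadgets, forcing $\omega(f)=p$ exactly and each such value to be $1$ and placed on a literal vertex; then read off a truth assignment (set $x_i$ true iff $f(x_i)=1$, making an arbitrary consistent choice if both are $0$ --- which cannot happen given the count) and check that every clause vertex $c_j$, being dominated by some vertex within distance $1$, must be dominated by a chosen literal vertex, hence $C_j$ is satisfied.

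The main obstacle --- and the one genuine difference from the ordinary \textsc{dominating set} reduction --- is controlling the extra power the broadcast model grants: a single vertex with value $k\ge 3$ can dominate a large radius-$k$ ball, so one must engineer the variable gadgets (with enough carefully placed pendants, or by using the $O(n^6)$/tree-algorithm-resistant structure already exploited in \cite{CHMPP17} for $k=2$) so that no vertex can ``afford'' to broadcast with value $>1$ within the budget $c=p$, and so that no single high-value broadcast vertex can simultaneously cover pendants of two different variable gadgets. Concretely one attaches sufficiently many pairwise-far pendants (or long enough pendant paths) to each literal pair that any broadcast covering all of them from outside the gadget would cost more than $p$; the paper remarks that ``a similar argument'' to the $k=2$ case of \cite{CHMPP17} works, so I would adapt that gadget, increasing pendant-path lengths as a function of $k$ if necessary, and then the verification in steps (iii)--(iv) becomes routine.
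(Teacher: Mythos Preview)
Your high-level plan (NP membership via a certificate check, then a \textsc{3-sat} reduction with one gadget per variable and one gadget per clause) matches the paper, but the concrete construction you sketch does \emph{not}, and the version you describe would not work as stated. You propose budget $c=p$ and aim to force value~$1$ at exactly one literal per gadget, with each clause vertex $c_j$ directly adjacent to its three literal vertices. For $k\ge 3$ this breaks: two literals sharing a clause are at distance~$2$, so a single broadcast of value $k$ at one literal reaches the other literal and that literal's pendant (distance~$3$), letting one expensive broadcast cover several variable gadgets and undercut the budget~$p$. You correctly flag this as ``the main obstacle'' but leave the gadget unspecified; any actual fix (lengthening the pendant paths so that foreign broadcasts cannot reach them) forces the minimum cost per gadget up from $1$ to~$k$, and then the budget must become $kp$, not~$p$.

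That is exactly what the paper does, and it is cleaner than trying to suppress large broadcast values. In the paper's construction each variable gadget $G_i$ has several leaves all at distance exactly $k$ from both $u_i$ and $u'_i$, and each clause $C_j$ is the far end of a path of $k$ vertices whose near end $\widehat{C_j}$ is adjacent to the three literal vertices; thus $C_j$ is also at distance exactly $k$ from its literals. The target cost is $kn$. Forward: set $f(u_i)=k$ or $f(u'_i)=k$ according to the satisfying assignment. Backward: no vertex outside $G_i$ can $f$-dominate a leaf of $G_i$ (they are too far), so $\omega(f|_{V(G_i)})\ge k$; the global budget then forces equality, and the only way to cover all leaves of $G_i$ with cost exactly $k$ is $f(u_i)=k$ or $f(u'_i)=k$. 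Since nothing is left over for the clause paths, each $C_j$ must be dominated by one of its literal vertices, giving the truth assignment. So the paper \emph{embraces} the radius-$k$ power of broadcasts rather than trying to neutralise it; your plan heads in the right direction once you lengthen the pendants, but you should state the scaled budget $kn$ and the distance-$k$ gadgets explicitly rather than leaving them as ``adapt the $k=2$ gadget if necessary''.
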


\begin{proof}
It is clear that it can be checked in polynomial time that $f\colon V(G)\to \{0,1, \dots , k\}$ is a dominating $k$-broadcast of $G$, so {\sc dominating $k$-broadcast problem} is NP. We now use a reduction from \textsc{3-sat problem}, following the ideas in \cite{CHMPP17,GJ79}.

Let $C$ be an instance of {\sc 3-sat}, with variables $U=\{u_1,\dots, u_n\}$ and clauses $C=\{ C_1, \dots ,C_m\}$. Let us construct an instance $G(C)$ of {\sc dominating $k$-broadcast problem}. For each variable $u_i$, we consider the gadget $G_i$ in Figure~\ref{fig:gadget_Bk}. For each clause $C_j=\{U_k, U_l, U_r\}$, where $U_i\in \{u_i, u'_i\}$, we consider a path with $k$ vertices from $\widehat{C_j}$ to $C_j$ and we add edges $ U_k\widehat{C_j}, U_l\widehat{C_j}, U_r\widehat{C_j}$ (an example with five variables and four clauses is shown in Figure~\ref{fig:clause_Bk}). Thus, we have obtained a graph $G(C)$ having $(k^2+2)n+km$ vertices and $(k^2+k)n+(k+2)m$ edges which is constructible from the instance $C$ of {\sc 3-sat} in polynomial time.

\begin{figure}[h]
\begin{center}
\includegraphics [width=0.16\textwidth]{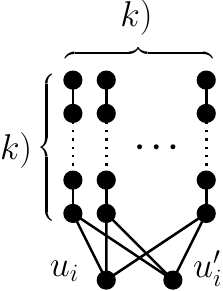}
\caption{Gadget $G_i$ associated to variable $u_i$. }\label{fig:gadget_Bk}
\end{center}
\end{figure}

\begin{figure}[h]
\begin{center}
\includegraphics [width=0.7\textwidth]{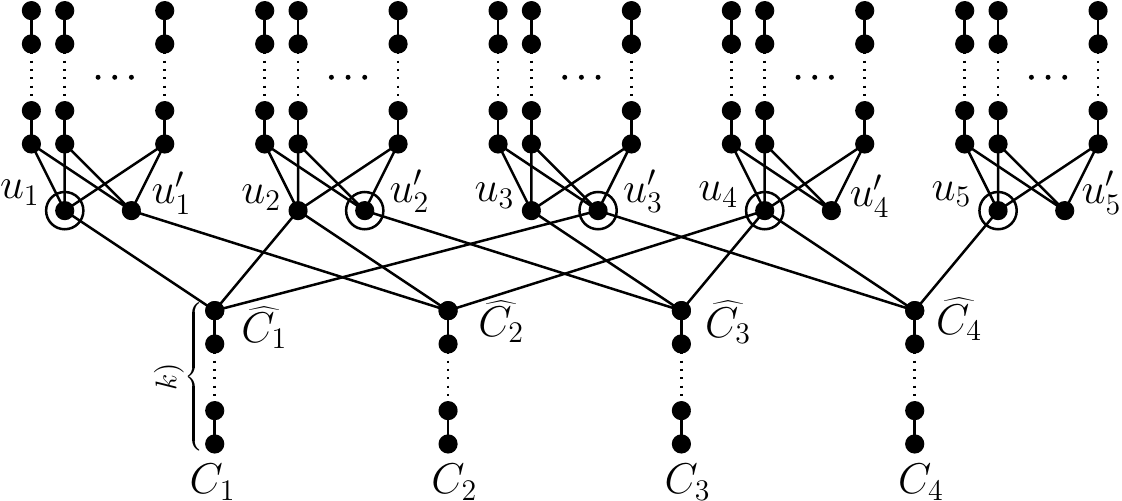}
\caption{A dominating $k$-broadcast in $G(C)$, with $f(v)=k$ for every circled vertex and $f(u)=0$, otherwise.}\label{fig:clause_Bk}
\end{center}
\end{figure}

We next now show that $C$ has a satisfying truth assignment if and only if the
graph $G(C)$ has a dominating $k$-broadcast with cost at most $kn$. Suppose that $C$ has a satisfying truth assignment and consider the function $f\colon V(G(C))\to \{0,1,\dots , k\}$ such that $f(u_i)=k, f(u'_i)=0$ if $u_i$ is true, $f(u_i)=0, f(u'_i)=k$ if $u_i$ is false and $f(x)=0$ if $x\neq u_i, u'_i$. Clearly, $f$ is a dominating $k$-broadcast with cost $\omega(f)=kn$.

Conversely, assume that $G(C)$ has a dominating $k$-broadcast $f$ with cost $\omega(f)\leq kn$. Using that every leaf $\ell$ of gadget $G_i$ is at distance $k$ of both $u_i$ and $u'_i$,  we obtain that $\ell$ is not $f$-dominated by any vertex outside $V(G_i)$.
Clearly, $\omega(f\vert _{V(G_i)})\geq k$, because it is not possible to $f$-dominate all leaves in $G_i$ with cost less than $k$. The hypothesis $\omega(f)\leq kn$ gives $\omega(f\vert _{V(G_i)})= k$. Therefore, there are exactly two possibilities: $f(u_i)=k$ and $f(x)=0$ for $x\in V(G_i)\setminus \{u_i\}$ or $f(u'_i)=k$ and $f(x)=0$ for $x\in V(G_i)\setminus \{u'_i\}$. Thus $\sum_{i=1}^n (f(u_i)+f(u'_i))=kn$ and hence $f(y)=0$ for every vertex $y$ of $G(C)$ not belonging to any gadget $G_i$. Particularly, $C_j$ is $f$-dominated by a vertex $u\in \{u_i,u'_i\}$ for some $i\in \{1, \dots ,n\}$. Finally, for each variable $u_i$, assign $u_i$ the value True if $f(u_i)=k$, otherwise assign $u_i$ the value False. Clearly, this is a satisfying truth assignment for $C$.
\end{proof}

\end{document}